\def\mm{\mathfrak{m}}\def\kk{\text{\bf k}}
\newdimen\theight
\def\TeXref#1{%
             \leavevmode\vadjust{\setbox0=\hbox{{\tt
                     \quad\quad  {\small \textrm #1}}}%
             \theight=\ht0
             \advance\theight by \lineskip
             \kern -\theight \vbox to
             \theight{\rightline{\rlap{\box0}}%
             \vss}%
           }}%
\newtheorem{theorem}{Theorem}[section]
\newtheorem{lemma}[theorem]{Lemma}
\newtheorem{definition}[theorem]{Definition}
\newtheorem{example}[theorem]{Example}
\begin{document}
\title[Witten Deformation]{The Witten deformation of the Dolbeault complex}
\author
{J. \'Alvarez L\'opez and P. Gilkey}
\address{PBG: Mathematics Department, University of Oregon, Eugene OR 97403-1222, USA}
\email{gilkey@uoregon.edu}
\address{JAL:  Department of Geometry and Topology, Faculty of Mathematics,
University of Santiago de Compostela, 15782 Santiago de Compostela,
Spain}
\email{jesus.alvarez@usc.es}
\thanks{Research partially supported by Projects MTM2016-75897-P and MTM2017-89686-P (AEI/FEDER, UE)}
\keywords{Witten deformation, local index density, Dolbeault complex, K\"ahler manifold}
\subjclass[2020]{58J20}
\begin{abstract}
We introduce a Witten-Novikov type perturbation $\bar\partial_{\bar\omega}$ of the Dolbeault complex of any complex K\"ahler manifold, defined by a form $\omega$ of type $(1,0)$ with $\partial\omega=0$. We give an explicit description of the associated index density which shows that it exhibits a nontrivial dependence on $\omega$. The heat invariants of lower order are shown to be zero.
\end{abstract}
\maketitle

\section{Introduction}
\subsection{Historial Summary and motivation}

Let $M$ be a closed manifold of dimension $m$, and let $h$ be a smooth function on $M$. 
Witten~\cite{W82} introduced a perturbation of the de~Rham differential of the form $d_h=d+\operatorname{ext}(dh)$, 
using exterior multiplication by $dh$. Since $d_h$ is gauge equivalent to $d$, the Betti numbers are unchanged. 
Given a Riemannian metric $g$ on $M$, the perturbed de~Rham codifferential is $\delta_h=\delta+\operatorname{int}(dh)$
where $\operatorname{int}$ denotes interior multiplication.
In general, the perturbed Laplacian, $\Delta_h=d_h\delta_h+\delta_hd_h$, is not gauge
equivalent to $\Delta$ and thus can have a different spectrum. In fact, when $h$ is a Morse function, Witten gave a beautiful
analytical proof of the Morse inequalities by analyzing the spectrum of $\Delta_{sh}$ as $s\to\infty$. 
This program of Witten was continued by Helffer and Sj\"ostrand~\cite{HS85}, and by Bismut and Zhang~\cite{BZ92}.

With more generality, Novikov~\cite{N81,N82} defined similar perturbed operators 
$d_\iota$, $\delta_\iota$ and $\Delta_\iota$, replacing $dh$ with a real closed $1$-form $\iota$ on $M$. 
He used Witten's procedure to estimate the zeros of $\iota$ if $\iota$ is of Morse type. 
Since $d_\iota$ need not be gauge equivalent to $d$, the new twisted Betti numbers
can be different.  However, one can show that the twisted Betti numbers of $d_{s\iota}$ ($s\in\mathbb R$) 
are constant except for a
finite number of values of $s$, where the dimensions may jump. Those ground values of twisted Betti numbers
are called the Novikov numbers of the cohomology class $[\iota]$; they are used in the Novikov version of the Morse inequalities. 
We refer to related work of  Braverman and Farber~\cite{BF97} and of Pazhitnov~\cite{Pa87}, and, more recently, 
to the work of many other authors \cite{BH01,BH08,HM06,Mi15}.

In previous work \cite{AG20}, we used methods of invariance theory to prove that the local index density for the 
Witten-Novikov Laplacian $\Delta_\iota$ is the Euler form if $m$ is even and, in particular, does not depend on $\iota$. 
If $m$ is odd, the local index density vanishes. The heat trace invariants of smaller order are also trivial, but the heat trace invariants
of higher order exhibit a nontrivial dependence on $\iota$. A different proof of the invariance of the twisted index density
was also given by the first author, Kordyukov, and Leichtnam~\cite{AKL}, where it was
 applied to study certain trace formulas for foliated flows (our original motivation). In \cite{AG20}, 
 we also extended the invariance of the twisted index density to the setting of manifolds with boundary, and gave an 
 equivariant version of that invariance for maps. The situation in the complex setting is quite different. We proved that the
 local index density for a Witten-Novikov type perturbation of the Dolbeault complex exhibits non-trivial dependence on
 the twisting 1-form in the case of Riemann surfaces.

In the present paper, we extend the study of the Witten-Novikov type perturbation of the Dolbeault complex to the case of an arbitrary complex K\"ahler manifold $(M,g,J)$ of dimension $m=2\mm$. We consider the Dolbeault complex $\bar\partial$ with coefficients in an auxiliary holomorphic vector bundle $E$ over $M$ equipped with a Hermitian metric $h$. The Hirzebruch-Riemann-Roch Theorem states that its index is given by the integral on $M$ of $\{\operatorname{Td}(M,g,J)\wedge\operatorname{ch}(E,h)\}_{m}$ (the homogeneous component of degree $m$ of the product of the Todd genus of $(M,g,J)$ and the Chern character of $(E,h)$). This theorem was refined by the second author~\cite{G73,G73a}, and by Atiyah, Bott, and Patodi~\cite{ABP73}, showing that $\{\operatorname{Td}(M,g,J)\wedge\operatorname{ch}(E,h)\}_{m}$ is indeed the index density that shows up in the asymptotic expansion of the heat kernel.

In this complex setting, the Witten-Novikov deformation of the Dolbeault complex is $\bar\partial_{\bar\omega}=\bar\partial+\operatorname{ext}(\bar\omega)$, where $\omega$ is a form of type $(1,0)$ on $M$ with $\partial\omega=0$. We use methods of invariance theory to give an explicit description of its index density, which is a perturbation of $\{\operatorname{Td}(M,g,J)\wedge\operatorname{ch}(E,h)\}_{m}$ with a non trivial dependence on $\omega$. The other heat invariants of lower order are shown to be zero. As a possible application, this description might be a step in a version for the leafwise Dolbeault complex of the trace formula for foliated flows given in~\cite{AKL}.

\subsection{Operators of Laplace Type} Henceforth, let $\operatorname{dvol}$ be the measure
defined by a Riemannian metric $g$ on a closed manifold
 $M$ of dimension $m$ and let $h$ be a Hermitian fiber metric on a vector bundle $E$ over $M$.
A second order partial differential operator $D$ on $C^\infty(E)$ is said to be of
{\it Laplace type} if the leading symbol is given by the metric tensor, i.e. if
$$
D=-\left\{\sum_{i,j=1}^mg^{ij}\frac{\partial^2}{\partial x^i\partial x^j}\operatorname{id}+\sum_{k=1}^mA^k\frac\partial{\partial x^k}+B\right\}
$$
relative to a system of local coordinates $(x^1,\dots, x^m)$ for $M$ and relative to a local
frame for $E$ where $g^{ij}=g(dx^i,dx^j)$ and where $A^k$ and $B$ are endomorphisms of $E$.
The following result follows from work of Seeley~\cite{S68} and others.

\begin{theorem} Let $D$ be an operator of Laplace type. 
\begin{enumerate}
\item There exists a smooth kernel $K(t,x,y,D)$ for $t>0$ so that
$$\left\{e^{-tD}\phi\right\}(x)=\int_MK(t,x,y,D)\phi(y)\operatorname{dvol}(y)\,.$$
\item There exist local invariants $a_{m,2n}(D)(x)$
so that as $t\downarrow0$,
\begin{eqnarray*}
&&\operatorname{Tr}_{E_x}K(t,x,x,D)\sim\sum_{n=0}^\infty t^{(2n-m)/2}a_{m,2n}(D)(x),\\
&&\operatorname{Tr}_{L^2}\{e^{-tD}\}\sim\sum_{n=0}^\infty t^{(2n-m)/2}\int_Ma_{m,2n}(D)(x)\operatorname{dvol}(x)\,.
\end{eqnarray*}\end{enumerate}
\end{theorem}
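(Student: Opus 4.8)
The plan is to build a parametrix for the heat semigroup $e^{-tD}$ from the symbol calculus for the resolvent $(\lambda-D)^{-1}$, following Seeley~\cite{S68}. Work in a coordinate chart with a local frame for $E$, so that $D$ has the displayed form, and write $|\xi|_g^2=\sum_{i,j}g^{ij}\xi_i\xi_j$ for its leading symbol. Along a ray from the origin that avoids the parabolic region containing the spectrum of $D$, the symbol $\lambda-D$ is elliptic with $\lambda$ as a parameter, and one seeks the symbol of $(\lambda-D)^{-1}$ as an asymptotic sum $\sum_{j\ge0}r_{-2-j}(x,\xi,\lambda)$ in which each $r_{-2-j}$ is homogeneous of degree $-2-j$ under the rescaling $(\xi,\lambda)\mapsto(\mu\xi,\mu^2\lambda)$. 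The leading term is $r_{-2}=(\lambda-|\xi|_g^2)^{-1}\operatorname{id}$, and imposing $\sigma(\lambda-D)\,\#\,\bigl(\textstyle\sum_j r_{-2-j}\bigr)=\operatorname{id}$ via the Leibniz composition formula produces a recursion expressing each $r_{-2-j}$ as $r_{-2}$ times a universal polynomial in $\xi$, $\lambda$, $r_{-2}$, and finitely many $x$-derivatives of the coefficients $g^{ij}$, $A^k$, $B$ of $D$ evaluated at the base point. (Alternatively one could use the Hadamard--Minakshisundaram--Pleijel parametrix $K\sim(4\pi t)^{-m/2}e^{-d(x,y)^2/4t}\sum_{k\ge0}t^ku_k(x,y)$ with transport equations for the $u_k$; the resolvent approach makes the locality of the heat coefficients most transparent.)

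Next I would recover the heat kernel from the Cauchy integral $e^{-tD}=\frac1{2\pi i}\int_{\Gamma}e^{-t\lambda}(\lambda-D)^{-1}\,d\lambda$, where $\Gamma$ is a suitable contour that encircles the spectrum of $D$ and along which $e^{-t\lambda}$ is integrable. Inserting the symbol expansion and evaluating the Schwartz kernel on the diagonal, the $j$-th term contributes
\[
\frac1{(2\pi)^m}\,\frac1{2\pi i}\int_{\mathbb R^m}\int_{\Gamma}e^{-t\lambda}\,r_{-2-j}(x,\xi,\lambda)\,d\lambda\,d\xi .
\]
The change of variables $\xi\mapsto t^{-1/2}\xi$, $\lambda\mapsto t^{-1}\lambda$ combined with the homogeneity of $r_{-2-j}$ pulls out a factor $t^{(j-m)/2}$ and leaves the universal expression
\[
\frac1{(2\pi)^m}\,\frac1{2\pi i}\int_{\mathbb R^m}\int_{\Gamma}e^{-\lambda}\,r_{-2-j}(x,\xi,\lambda)\,d\lambda\,d\xi ,
\]
whose fibrewise trace is, by construction, a local invariant built from the jets of the symbol of $D$ at $x$. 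When $j$ is odd this integrand is an odd function of $\xi$, so the $\xi$-integral vanishes, and only the terms with $j=2n$ remain; setting $a_{m,2n}(D)(x)$ equal to the fibrewise trace of the $j=2n$ expression gives the coefficients in the statement. Finally, assembling the local parametrices with a partition of unity yields a global approximate heat kernel, and elliptic regularity (the error operator is smoothing and $e^{-tD}$ maps $L^2$ into $\bigcap_s H^s=C^\infty$) gives the smooth kernel $K(t,x,y,D)$ of part~(1); integrating over the diagonal and then over $M$ produces the two asymptotic expansions of part~(2).

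The step I expect to be the main obstacle is the remainder estimate that turns the formal symbol expansion into a genuine asymptotic one: one must show that truncating the resolvent symbol at order $-2-N$ leaves an operator whose kernel is of class $C^k$ with $C^k$-norm $O(t^{(N-m-k)/2})$, uniformly on $M$, as $t\downarrow0$. This requires careful control of the symbol seminorms of the $r_{-2-j}$ as $|\lambda|\to\infty$ along $\Gamma$, absolute convergence of the iterated $\lambda$- and $\xi$-integrals, and the verification that composition and inversion stay within the class of symbols having the correct joint homogeneity and decay in $(\xi,\lambda)$. The remaining ingredients---solving the recursion, the parity vanishing of the odd terms, and the manifest locality of $a_{m,2n}$---are routine, if lengthy, bookkeeping.
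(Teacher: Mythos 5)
The paper does not supply its own proof of this theorem; it simply cites Seeley \cite{S68} and others, and your resolvent-parametrix argument is precisely a reconstruction of Seeley's method, so your approach matches the reference the paper relies on. The outline is correct---including the homogeneity bookkeeping that produces the factor $t^{(j-m)/2}$, the parity argument killing the odd-$j$ terms, and your accurate identification of the symbol seminorm and remainder estimates as the genuine technical burden.
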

\subsection{The local index density}
Let $\mathcal{E}=\{d_i:C^\infty(E_i)\rightarrow C^\infty(E_{i+1})\}$ where $(E_i,h_i)$ is a finite collection of
Hermitian vector bundles and where the $d_i$ are first order partial differential operators. We shall
say that $\mathcal{E}$
is an {\it elliptic complex of Dirac type} if $d_{i+1}d_i=0$ and if the associated self-adjoint second order operators
$D_i:=d_i^*d_i+d_{i-1}d_{i-1}^*$ are of Laplace type. The cohomology groups of $\mathcal{E}$ are given by
$$
H^i(\mathcal{E}):=\frac{\operatorname{kernel}(d_i:C^\infty(E_i)\rightarrow C^\infty(E_{i+1}))}
{\operatorname{image}(d_{i-1}:C^\infty(E_{i-1})\rightarrow C^\infty(E_i))}\,.
$$
The Hodge Decomposition Theorem permits us to identify $H^i(\mathcal{E})$ with $\ker(D_i)$; these
groups are finite dimensional and we take the super-trace to define
$$
a_{m,2n}(\mathcal{E}):=\sum_i(-1)^ia_{m,2n}(D_i)\text{ and }
\operatorname{index}(\mathcal{E}):=\sum_i(-1)^i\dim H^i(\mathcal{E})\,.
$$
If $m$ is odd, then $\operatorname{index}(\mathcal{E})=0$ so we assume $m$ even henceforth.
The invariant $a_{m,2n}(\mathcal{E})$ for $2n=m$ is called the {\it local index density} as
a cancellation argument due to Bott shows that
$$
\int_Ma_{m,2n}(\mathcal{E})(x)\operatorname{dvol}(x)=\left\{\begin{array}{cl}\operatorname{index}(\mathcal{E})&\text{if }2n=m\\
0&\text{if }2n\ne m\end{array}\right\}\,.
$$

\subsection{The de Rham complex} 
Let $\Lambda^i(M)$ be the vector bundle of $i$-forms and let
$$
d:C^\infty(\Lambda^i(M))\rightarrow C^\infty(\Lambda^{i+1}(M))\text{ for }0\le i\le m-1\,,
$$
be exterior differentiation. This defines an elliptic complex of Dirac type we shall denote by
$\mathcal{E}_{\operatorname{deR}}(M,g)$.
Let $\chi(M)$ be the Euler-Poincar\'e characteristic of $M$.
The Hodge--de Rham theorem permits us to identify $H^{p}(\mathcal{E}_{\operatorname{deR}}(M,g))$
with the topological cohomology groups $H^p(M;\mathbb{C})$ and shows that
$\operatorname{index}(\mathcal{E}_{\operatorname{deR}}(M,g))=\chi(M)$.

\subsection{The Dolbeault complex}
Let $J$ be an integrable almost complex structure on a smooth manifold $M$ of complex dimension $\mm$
 and corresponding real dimension $m=2\mm$.
 Let $g$ be a $J$ invariant
 Riemannian metric on $M$; $(M,g,J)$ is a Hermitian holomorphic manifold. 
 Let $\Omega(X,Y):=g(X,JY)$ be the K\"ahler form; we say $(M,g,J)$ is {\it K\"ahler} if $d\Omega=0$.
 Let $E$ be an auxiliary holomorphic
 vector bundle over $M$ equipped with a Hermitian metric $h$. 
 Let $\sqrt2\ \bar\partial$ be the normalized Dolbeault operator;
 the normalizing constant of $\sqrt{2}$ is present to ensure 
that this is of Dirac type. 
 The Dolbeault complex $\mathcal{E}_{\operatorname{Dol}}(M,g,J,E,h)$ is defined by
 $$
\sqrt{2}\bar\partial:C^\infty(\Lambda^{0,i}(M)\otimes E)\rightarrow
C^\infty(\Lambda^{0,i+1}(M)\otimes E)\text{ for }0\le i\le\mm-1\,.
$$
Let $H^p(M;\mathcal{O}(E))$
be the cohomology groups of $M$ with coefficients in the sheaf of holomorphic sections to $E$. Identify
$H^p(\mathcal{E}_{\operatorname{Dol}}(M,g,J,E,h))$ with $H^p(M;\mathcal{O}(E))$;
if $E$ is the trivial line bundle, then
$\operatorname{index}\{\mathcal{E}_{\operatorname{Dol}}(M,g,J,E,h)\}$ is the arithmetic genus of $M$.

\subsection{The Chern-Gauss-Bonnet and Hirzebruch-Riemann-Roch Theorems}
Let $m=2\mm$. Let
$R_{ijkl}$ denote the components of the curvature tensor relative to a local orthonormal frame for the tangent bundle of $M$.
We follow the discussion in Chern~\cite{C44} and define the Pfaffian or Euler form by setting:
\begin{eqnarray*}
&&\operatorname{Pf}_m(x,g):=\sum_{i_1,\dots,i_m,j_1,\dots,j_m=1}^m\frac{(-1)^{\mm}}{8^{\mm}\pi^{\mm}\mm!}g(e^{i_1}\wedge\dots\wedge e^{i_m},e^{j_1}\wedge\dots\wedge e^{j_m})\\
&&\qquad\qquad\qquad\qquad\qquad\qquad\qquad R_{i_1i_2j_1j_2}\dots R_{i_{m-1}i_mj_{m-1}j_m}(x)\,.
\end{eqnarray*}
In the complex setting, let $\operatorname{Td}(M,g,J)$ be the 
total Todd genus of the complex tangent bundle of $(M,g,J)$
and let $\operatorname{ch}(E,h)$ be the total Chern character; we refer to Hirzebruch~\cite{H66} for details. We set
$$
\{\operatorname{Td}(M,g,J)\wedge\operatorname{ch}(E,h)\}_{m}=\sum_{i+j=m}
\operatorname{Td}_i(M,g,J)\wedge\operatorname{ch}_j(E,h)\,.
$$
We use the Hodge $\star$ operator to identify top dimensional forms with scalar functions. 
We refer to  Chern~\cite{C44} for the proof of Assertion~(1) and to
Hirzebruch~\cite{H66} for the proof of Assertion~(2) in the following result.
\begin{theorem}\label{T1.2}\ \begin{enumerate}
\item $\displaystyle\operatorname{index}(\mathcal{E}_{\operatorname{deR}}(M,g))=
\int_M\operatorname{Pf}_m(M,g)\operatorname{dvol}$.
\item $\displaystyle\operatorname{index}(\mathcal{E}_{\operatorname{Dol}}(M,g,J,E,h))=
\int_M\star\{\operatorname{Td}(M,g,J)\wedge\operatorname{ch}(E,h)\}_{m}\operatorname{dvol}$.
\end{enumerate}\end{theorem}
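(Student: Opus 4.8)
These two assertions are the Chern--Gauss--Bonnet and Hirzebruch--Riemann--Roch theorems; the cited references (Chern~\cite{C44}, Hirzebruch~\cite{H66}) contain the classical topological proofs, and my plan is to recall both those routes and the heat-equation route furnished by the asymptotic expansion recalled above. In every case the first step is the same: the Hodge--de Rham theorem gives $\operatorname{index}(\mathcal{E}_{\operatorname{deR}}(M,g))=\chi(M)$, while Dolbeault's theorem gives $\operatorname{index}(\mathcal{E}_{\operatorname{Dol}}(M,g,J,E,h))=\sum_p(-1)^p\dim H^p(M;\mathcal{O}(E))$, the holomorphic Euler characteristic; in particular each index is independent of the auxiliary metrics, so it suffices to verify the integral formula for one convenient metric, or equivalently to compute the local index density $a_{m,m}$.

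For Assertion~(1) by Chern's intrinsic argument: pull $\operatorname{Pf}_m$ back along the projection $\pi\colon SM\to M$ of the unit tangent sphere bundle, where $\pi^{*}\operatorname{Pf}_m=d\Phi$ for an explicit transgression form $\Phi$ manufactured from the Levi-Civita connection and curvature of $g$. Choosing a vector field $V$ with nondegenerate isolated zeros and applying Stokes' theorem to $M$ with small balls around the zeros removed, using the section $V/|V|$ of $SM$, the boundary terms tend to the local indices of $V$, whose sum is $\chi(M)$ by Poincar\'e--Hopf; this yields $\chi(M)=\int_M\operatorname{Pf}_m\operatorname{dvol}$. In the alternative, heat-theoretic route, Bott's cancellation (recalled above) gives $\chi(M)=\int_M a_{m,m}(\mathcal{E}_{\operatorname{deR}})(x)\operatorname{dvol}$; invariance theory identifies $a_{m,m}(\mathcal{E}_{\operatorname{deR}})$ with a universal polynomial of the appropriate weight in the curvature of $g$, hence a linear combination of Pontryagin forms; evaluating on product metrics and using multiplicativity of $\chi$ kills all but the top term, and a single normalization pins that term down as $\operatorname{Pf}_m$ (the ``fantastic cancellation'' of Patodi and Gilkey, cf.~\cite{ABP73,G73}).

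For Assertion~(2) by Hirzebruch's argument: by the splitting principle and the additivity of the holomorphic Euler characteristic in short exact sequences one reduces to the untwisted case, where $\chi(M,\mathcal{O})$ is shown to define a genus on the rationalized complex cobordism ring and is computed to agree with the Todd genus on the generators $\mathbb{CP}^{n}$; reassembling the pieces recovers $\int_M\star\{\operatorname{Td}(M,g,J)\wedge\operatorname{ch}(E,h)\}_m\operatorname{dvol}$. The heat-theoretic route is parallel: $a_{m,m}(\mathcal{E}_{\operatorname{Dol}})$ is, by invariance theory, a universal polynomial in the Chern forms of $TM$ and of $E$, and it is matched against $\star\{\operatorname{Td}\wedge\operatorname{ch}\}_m$ by testing on products of complex projective spaces carrying assorted line bundles, which suffices to separate all monomials of the given weight. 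The principal obstacle in both assertions is exactly this last step --- assembling enough manifolds (and, in Assertion~(2), bundles) on which the two sides are independently computable to pin down every coefficient --- together with, for Assertion~(2), the input from cobordism theory that the $\mathbb{CP}^{n}$ rationally generate the relevant ring.
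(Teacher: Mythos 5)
The paper supplies no proof of Theorem~\ref{T1.2}; it explicitly defers Assertion~(1) to Chern~\cite{C44} and Assertion~(2) to Hirzebruch~\cite{H66}, and notes that the heat-equation refinement is the content of Theorem~\ref{T1.3} (Patodi, Gilkey, Atiyah--Bott--Patodi). Your proposal correctly reconstructs exactly those two classical arguments --- Chern's transgression on the unit sphere bundle combined with Poincar\'e--Hopf, and Hirzebruch's cobordism/splitting-principle reduction to $\mathbb{CP}^n$ --- together with the parallel heat-equation route via invariance theory, so it is the same approach the paper invokes by citation; no discrepancy.
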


By identifying the local index densities of the de Rham and
Dolbeault complexes with the integrands of Theorem~\ref{T1.2},
Patodi~\cite{P71,P71a} gave a heat equation proof of Theorem~\ref{T1.2}~(1)
in the real setting
and of Theorem~\ref{T1.2}~(2)  in the complex K\"ahler setting by showing:

\begin{theorem}\label{T1.3}
\ \begin{enumerate}
\item $\displaystyle
a_{m,2n}(\mathcal{E}_{\operatorname{deR}}(M,g))=\left\{\begin{array}{cl}
0&\text{if }2n<m\\\operatorname{Pf}_m(M,g)&\text{if }2n=m
\end{array}\right\}$.
\smallbreak\item If $(M,g,J)$ is K\"ahler, then $a_{m,2n}(\mathcal{E}_{\operatorname{Dol}}(M,g,J,E,h))$
$$=\left\{\begin{array}{cl}
0&\text{if }2n<m\\
\star\{\operatorname{Td}(M,g,J)\wedge\operatorname{ch}(E,h)\}_{m}&\text{if }2n=m\end{array}\right\}\,.
$$
\end{enumerate}\end{theorem}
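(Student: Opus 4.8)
The plan is to deduce both assertions from the interplay of three ingredients, following the route pioneered by Patodi and recast in the language of local invariance theory by Atiyah--Bott--Patodi and by the second author: (i) the fact that $a_{m,2n}(\mathcal E_{\operatorname{deR}})(x)$ and $a_{m,2n}(\mathcal E_{\operatorname{Dol}})(x)$ are \emph{local} invariants of the underlying geometry, of a very restricted algebraic type; (ii) the multiplicativity of the heat supertrace under Riemannian, respectively K\"ahler, products; and (iii) the evaluation of both sides on well chosen product examples, where the index is supplied by Theorem~\ref{T1.2}. Items (i) and (ii) together pin the low--order coefficients to zero and the top coefficient to a universal constant multiple of the expected characteristic form, and (iii) fixes that constant.

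For Assertion~(1): the operators $D_i$ are the form Laplacians on $C^\infty(\Lambda^i(M))$, which are of Laplace type, so the coefficients $a_{m,2n}(D_i)(x)$ exist by Seeley's theorem recalled above. First, by H.~Weyl's first main theorem for the orthogonal group together with the structure theory of the heat coefficients of a Laplace type operator, each $a_{m,2n}(D_i)(x)$ is a polynomial in the covariant derivatives of the curvature tensor of $g$, homogeneous of weight $2n$ under the rescaling $g\mapsto c^2g$; the same then holds for the supertrace $a_{m,2n}(\mathcal E_{\operatorname{deR}})(x)=\sum_i(-1)^i a_{m,2n}(D_i)(x)$, which consequently vanishes for $2n$ odd. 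Next, when $M=M_1\times M_2$ is a Riemannian product the de Rham complex of $M$ is the graded tensor product of those of the factors, so on the diagonal $e^{-tD}=e^{-tD^{(1)}}\otimes e^{-tD^{(2)}}$, and matching powers of $t$ in the small--time expansions gives
\begin{equation*}
a_{m,2n}\bigl(\mathcal E_{\operatorname{deR}}(M_1\times M_2)\bigr)=\sum_{n_1+n_2=n}a_{m_1,2n_1}\bigl(\mathcal E_{\operatorname{deR}}(M_1)\bigr)\,a_{m_2,2n_2}\bigl(\mathcal E_{\operatorname{deR}}(M_2)\bigr).
\end{equation*}
A flat circle has vanishing de Rham supertrace, and feeding it repeatedly into this product formula while keeping track of weights --- this is the content of Gilkey's cancellation lemma, whose moral is that a nonzero monomial of weight $2n$ must carry at least $\mm$ blocks of curvature and no covariant derivatives --- forces $a_{m,2n}(\mathcal E_{\operatorname{deR}})=0$ for $2n<m$ and leaves only $a_{m,m}(\mathcal E_{\operatorname{deR}})=c_m\operatorname{Pf}_m(M,g)$ for $2n=m$, with $c_m$ a universal constant. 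Finally, Bott's cancellation recalled above identifies $\int_M a_{m,m}(\mathcal E_{\operatorname{deR}})\operatorname{dvol}$ with $\operatorname{index}(\mathcal E_{\operatorname{deR}})=\chi(M)$, while Theorem~\ref{T1.2}(1) gives $\int_M\operatorname{Pf}_m(M,g)\operatorname{dvol}=\chi(M)$; hence $c_m\chi(M)=\chi(M)$, and evaluating on $M=(S^2)^{\times\mm}$, where $\chi\neq0$, yields $c_m=1$.

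For Assertion~(2) one would run the same three steps in the Hermitian holomorphic category, and this is exactly where the K\"ahler condition $d\Omega=0$ enters. When $\Omega$ is closed, $J$ is parallel, the holonomy of $g$ lies in $\mathrm{U}(\mm)$, and the Bochner--Kodaira identities realize the Dolbeault Laplacians $D_i$ on $C^\infty(\Lambda^{0,i}(M)\otimes E)$ as Laplace type operators whose Weitzenb\"ock endomorphisms are assembled from the curvature of $g$ and the curvature of $(E,h)$ alone. The unitary analogue of Weyl's theorem then shows $a_{m,2n}(\mathcal E_{\operatorname{Dol}})(x)$ to be a universal polynomial of weight $2n$ in the Chern forms of the complex tangent bundle of $(M,g,J)$ and of $(E,h)$; the multiplicativity of the heat supertrace under K\"ahler products, under $E=E_1\boxtimes E_2$, and under direct sums of Hermitian bundles then cuts this down exactly as before --- to $0$ for $2n<m$ and to a universal weight--$m$ combination of the $\operatorname{Td}_i(M,g,J)$ and the $\operatorname{ch}_j(E,h)$ for $2n=m$. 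One then evaluates on products of copies of $\mathbb{CP}^1$ carrying the line bundles $\mathcal O(k)$: Theorem~\ref{T1.2}(2) computes all the resulting indices, and since these examples separate the coefficients of the universal combination one concludes $a_{m,m}(\mathcal E_{\operatorname{Dol}})=\star\{\operatorname{Td}(M,g,J)\wedge\operatorname{ch}(E,h)\}_{m}$.

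The main obstacle is step~(ii). By Bott's cancellation one knows a priori only that the low--order coefficients \emph{integrate} to zero, and the work is to upgrade this to pointwise vanishing --- Patodi's ``fantastic cancellation''. In the invariance--theoretic framework this is the combinatorial lemma that a heat--supertrace invariant which is multiplicative under products and has weight strictly below the dimension cannot be assembled from the available curvature blocks, and the same lemma, applied at the top weight, must in addition eliminate the characteristic forms of the right weight other than the expected one (the Pontryagin forms in the de Rham case). In the complex case there is the further preliminary task of showing that only Chern--Weil forms can occur in $a_{m,2n}(\mathcal E_{\operatorname{Dol}})$, and it is precisely here that the K\"ahler hypothesis $d\Omega=0$ is indispensable. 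Once the cancellation lemma is in hand, step~(iii) --- identifying the top coefficient and computing the normalizing constant --- is routine.
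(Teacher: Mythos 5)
Theorem~\ref{T1.3} is stated in the paper as classical background and is not proved there: the authors cite Patodi~\cite{P71,P71a}, Gilkey~\cite{G73,G73a}, and Atiyah--Bott--Patodi~\cite{ABP73} for it, so there is no internal proof to compare against. Your three-step outline (locality of the heat coefficients, multiplicativity under products plus the restriction/cancellation lemma, and normalization on model spaces) correctly reproduces the invariance-theoretic route of~\cite{G73,G73a}, which is precisely the framework the present paper then adapts in Sections~\ref{S3}--\ref{S7} to prove its new Theorem~\ref{T1.4}, so your approach matches the paper's.
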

Shortly thereafter, other proofs of Theorem~\ref{T1.3} were given.
Gilkey~\cite{G73,G73a} used invariance theory directly and Atiyah, Bott, and Patodi~\cite{ABP73}
combined invariance theory with a study of the twisted signature complex and the twisted
spin-c complex to prove Theorem~\ref{T1.3}. 
The subject has an extensive history
and we refer to \cite{G95} for further details. 
As noted, the twisted signature complex and
twisted spin complex can be treated using heat equation methods \cite{ABP73,G73} and
a heat equation proof of the full Atiyah-Singer index theorem given thereby.
We note that the local index density for the Dolbeault complex
does not agree in general with the Hirzebruch-Riemann-Roch
integrand of Theorem~\ref{T1.2}~(2) in the non-K\"ahler setting as was shown in later work by 
Gilkey,  Nik\v cevi\'c,\ and Pohjanpelto~\cite{GNP97};
Theorem~\ref{T1.3}~(2) can fail if $(M,g,J)$ is not assumed K\"ahler.

\subsection{The Witten deformation} If $\iota$ is a closed 1-form on a
Riemannian manifold $(M,g)$, one can
define the deformed de Rham complex $\mathcal{E}_{\operatorname{deR}}(M,g,\iota)$ by setting 
$$
d_\iota:=d+\operatorname{ext}(\iota):C^\infty(\Lambda^iM)\rightarrow C^\infty(\Lambda^{i+1}(M))\,.
$$
The assumption that $\iota$ is closed ensures that $d_\iota^2=0$; since $\iota$ introduces a 
$0^{\operatorname{th}}$ perturbation, the leading symbol of the associated second order operators is
unchanged so $\mathcal{E}_{\operatorname{deR}}(M,g,\iota)$ is an elliptic complex of Dirac type.
The authors~\cite{AG20} showed previously that the deformation $\iota$ does not enter in the index density
in this setting
$$
a_{m,2n}(\mathcal{E}_{\operatorname{deR}}(M,g,\iota))
=\left\{\begin{array}{cl}0&\text{if }2n<m\\\operatorname{Pf}_m(M,g)&\text{if }2n=m\end{array}\right\}\,.
$$
This result is sharp; if $2n>m$, then $a_{m,2n}(\mathcal{E}_{\operatorname{deR}}(M,g,\iota))$ 
does depend upon $\iota$ in general.

In the complex setting, let $\omega$ be a form of type $(1,0)$ on $M$ with $\partial\omega=0$ and let
$\mathcal{M}:=(M,g,J,\omega,E,h)$. Set 
$\bar\partial_{\bar\omega}:=\bar\partial+\operatorname{ext}(\bar\omega)$. The assumption 
$\partial\omega=0$ implies $\bar\partial\bar\omega=0$ and ensures 
$\bar\partial_{\bar\omega}^2=0$. Let $\mathcal{E}_{\operatorname{Dol}}(\mathcal{M})$ be
the Witten perturbation of the Dolbeault complex with coefficients in $E$ defined by taking
$$
\sqrt{2}\ \bar\partial_{\bar\omega}:
C^\infty(\Lambda^{(0,i)}\otimes E)\rightarrow C^\infty(\Lambda^{(0,i+1)}\otimes E)
\text{ for }0\le i\le\mm-1\,.
$$
This is an elliptic complex of Dirac type. Let $\Im(\omega)=\frac1{2\sqrt{-1}}(\omega-\bar\omega)$
be the imaginary part of $\omega$. Let
\begin{eqnarray*}
&&\Theta:=\displaystyle\sum_k\frac{1}{k!\pi^k}\{d\Im(\omega)\}^k,\\
&&\left\{\operatorname{Td}\wedge\operatorname{ch}\wedge\Theta\right\}_{m}:=
\sum_{i+j+k=m}\operatorname{Td}(M,g,J)_i\wedge\operatorname{ch}(E,h)_j\wedge\Theta_k\,.
\end{eqnarray*}
The following is the main new result of this paper.

\begin{theorem}\label{T1.4}
$a_{m,2n}(\mathcal{E}_{\operatorname{Dol}}(\mathcal{M}))=\left\{\begin{array}{cl}0&\text{if }2n<m\\
\star\left\{\operatorname{Td}\wedge\operatorname{ch}\wedge\Theta\right\}_{m}&\text{if }2n=m\end{array}\right\}$.\end{theorem}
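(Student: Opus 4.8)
The plan is to recognize that, despite its name, $\bar\partial_{\bar\omega}=\bar\partial+\operatorname{ext}(\bar\omega)$ is an \emph{ordinary} Dolbeault operator --- the one attached to $E$ twisted by a holomorphic line bundle manufactured from $\omega$ --- after which Theorem~\ref{T1.3}~(2) applies directly and Theorem~\ref{T1.4} reduces to identifying the extra factor by a Chern--Weil computation.

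First I would build that line bundle. Since $\bar\omega$ is a scalar-valued $(0,1)$-form with $\bar\partial\bar\omega=\overline{\partial\omega}=0$, the operator $\bar\partial+\bar\omega$ on the trivial line bundle $\underline{\mathbb C}$ satisfies the Koszul--Malgrange integrability condition $\bar\partial\bar\omega+\bar\omega\wedge\bar\omega=0$ and hence defines a holomorphic structure; call the resulting holomorphic line bundle $L_{\bar\omega}$ and give it the flat Hermitian metric $h_0$ for which the constant section $1$ has unit norm. Then $e\mapsto e\otimes 1$ is a $C^\infty$ isometry from $(E,h)$ onto $(E\otimes L_{\bar\omega},h\otimes h_0)$, and a computation in a local holomorphic frame $\sigma=e^{-u}\cdot 1$ for $L_{\bar\omega}$, with $\bar\partial u=\bar\omega$, shows that under this map $\bar\partial_{E\otimes L_{\bar\omega}}$ becomes $\bar\partial+\operatorname{ext}(\bar\omega)=\bar\partial_{\bar\omega}$. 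Hence $\mathcal E_{\operatorname{Dol}}(\mathcal M)$ and $\mathcal E_{\operatorname{Dol}}(M,g,J,E\otimes L_{\bar\omega},h\otimes h_0)$ are isometrically isomorphic as elliptic complexes of Dirac type, their associated Laplace-type operators are intertwined by the isometry, and therefore $a_{m,2n}(\mathcal E_{\operatorname{Dol}}(\mathcal M))=a_{m,2n}(\mathcal E_{\operatorname{Dol}}(M,g,J,E\otimes L_{\bar\omega},h\otimes h_0))$ for every $n$.

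Next, since $(M,g,J)$ is K\"ahler, Theorem~\ref{T1.3}~(2) applies to the right-hand complex: the invariant vanishes for $2n<m$, and for $2n=m$ it equals $\star\{\operatorname{Td}(M,g,J)\wedge\operatorname{ch}(E\otimes L_{\bar\omega},h\otimes h_0)\}_m$, which by multiplicativity of the Chern character equals $\star\{\operatorname{Td}(M,g,J)\wedge\operatorname{ch}(E,h)\wedge\operatorname{ch}(L_{\bar\omega},h_0)\}_m$. It therefore remains to check that $\operatorname{ch}(L_{\bar\omega},h_0)=\Theta$: the Chern connection of $(L_{\bar\omega},h_0)$ has curvature form $F=\partial\bar\omega-\bar\partial\omega=d(\bar\omega-\omega)=-2\sqrt{-1}\,d\Im(\omega)$, so its Chern character form is $\exp\big(\tfrac{\sqrt{-1}}{2\pi}F\big)=\exp\big(\tfrac1\pi\,d\Im(\omega)\big)=\sum_k\tfrac1{k!\,\pi^k}\{d\Im(\omega)\}^k=\Theta$. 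Assembling the three displays proves Theorem~\ref{T1.4}.

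The step demanding the most care is the isometric identification: one must verify not only that $\bar\partial_{\bar\omega}$ and $\bar\partial_{E\otimes L_{\bar\omega}}$ agree on sections, but that the entire complex, with its Hermitian fiber metrics and induced $L^2$ structures, is transported correctly, so that each heat invariant --- and not merely the index --- is preserved. For a proof that avoids quoting Patodi's Theorem~\ref{T1.3}~(2) altogether, the alternative is the invariance-theory route of Atiyah--Bott--Patodi and Gilkey: a homogeneity argument together with H.~Weyl's theorem on invariants expresses $a_{m,2n}(\mathcal E_{\operatorname{Dol}}(\mathcal M))$ as a universal polynomial in the curvature of $g$, the curvature of $E$, and $d\Im(\omega)$, whose coefficients are then pinned down on products of lower-dimensional K\"ahler manifolds and at $\omega=0$; there the genuine difficulty is excluding contributions of undifferentiated $\omega$, which is exactly what the line-bundle identification renders automatic.
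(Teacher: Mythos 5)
Your proof is correct, and it takes a genuinely different and substantially shorter route than the paper's.  You observe that $\bar\partial+\operatorname{ext}(\bar\omega)$ is nothing but the Dolbeault operator of $E\otimes L_{\bar\omega}$, where $L_{\bar\omega}$ is the underlying $C^\infty$ trivial line bundle endowed with the holomorphic structure $\bar\partial+\bar\omega$ (Koszul--Malgrange applies since $\bar\partial\bar\omega=\overline{\partial\omega}=0$) and the flat Hermitian metric $h_0$.  The map $s\mapsto s\otimes 1$ is a fiberwise isometry that intertwines the two $\bar\partial$-operators, hence their formal adjoints and the Dolbeault Laplacians, hence all heat invariants $a_{m,2n}$.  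Theorem~\ref{T1.3}~(2) then applies to $\mathcal{E}_{\operatorname{Dol}}(M,g,J,E\otimes L_{\bar\omega},h\otimes h_0)$ and multiplicativity of $\operatorname{ch}$ reduces the problem to the Chern--Weil computation $\operatorname{ch}(L_{\bar\omega},h_0)=\exp\bigl(\tfrac{\sqrt{-1}}{2\pi}(\partial\bar\omega-\bar\partial\omega)\bigr)=\exp\bigl(\tfrac1\pi d\Im\omega\bigr)=\Theta$, which you carry out correctly (and which also reproduces Lemma~\ref{L7.2} directly).  By contrast, the paper never makes the line-bundle identification; it redevelops the invariance-theory machinery of \cite{G73a} from scratch for the deformed complex, building the polynomial algebra $\mathfrak{A}_\mm$ with the $\omega$-variables adjoined, exploiting the restriction map and $U(\mm)$-invariance (Lemmas~\ref{L6.1}--\ref{L7.3}) to pin down the shape of the top-order invariant, and fixing coefficients by evaluating on products of Riemann surfaces and on $\omega=0$.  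Your argument buys brevity and transparency, and it explains structurally why the index density depends on $\omega$ in the Dolbeault case (the Chern character of $L_{\bar\omega}$ is generally nontrivial) but not in the de~Rham case (where the same device produces a flat bundle); the paper's route buys self-containedness --- it does not invoke Theorem~\ref{T1.3}~(2) as a black box --- and yields, as a byproduct, the description of $\ker(r)$ in Section~7.1, which your argument does not give.
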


\subsection{The signature and spin complexes}
Let $M$ be an oriented manifold of dimension $4k$. Let
$d+\delta:C^\infty(\Lambda^\pm(M))\rightarrow C^\infty(\Lambda^\mp(M))$ be the Hirzebruch
signature complex. We then have $d_\iota +\delta_\iota =d+\delta+(\operatorname{ext}+\operatorname{int})(\iota )$.
Now $(\operatorname{ext}-\operatorname{int})(\iota ):\Lambda^\pm\rightarrow\Lambda^\mp$ but
$(\operatorname{ext}+\operatorname{int})(\iota )$ does not have this property if $\iota\ne0$. 
So $d_\iota +\delta_\iota $ does not induce a map on the signature complex; 
it is not possible to deform the signature complex in this fashion. Similarly the spin
complex can not be deformed in this fashion. The de Rham and Dolbeault complexes are 
$\mathbb{Z}$ graded and this seems to be crucial in studying the Witten deformation; 
the signature and spin complexes, on the other hand, are $\mathbb{Z}_2$ graded and
this makes all the difference. For this reason, we shall not follow the approach of 
Atiyah, Bott, and Patodi~\cite{ABP73} to study
the Witten deformation of the Dolbeault complex by passing to the spin-c complex. Instead, we shall return
to the original treatment of Gilkey~\cite{G73a} and apply invariance theory directly.
\subsection{Brief guide to the paper} In Section~\ref{S2}, we discuss product formulas for the heat trace asymptotics. 
In Section \ref{S3}, we normalize the systems of coordinates and vector bundle frames 
to be considered up to arbitrarily high, but finite, order; this in effect reduces the structure group
to the unitary group. In Section~\ref{S4}, we introduce the requisite spaces of invariants; 
the precise notion of what is meant by a ``local invariant" 
or a ``local formula" is crucial to our study. 
In Section \ref{S5}, we discuss the restriction map.
In Section~\ref{S6}, we use invariance under the action of the unitary group
$U(\mm)$
to establish certain technical results. In Section~\ref{S7}, we complete the proof of Theorem~\ref{T1.4}.

\section{Product formulas}\label{S2}
The following observations are well known -- see, for example, the discussion in Gilkey~\cite{G95}. Let $M=M_1\times M_2$ where $M_i$ are closed manifolds of dimension $m_i$.
Let $\pi_i:M\rightarrow M_i$ be projection on the $i^{\operatorname{th}}$ factor. Let $g_i$ be Riemannian
metrics on $M_i$ and let
$g=\pi_1^*g_1+\pi_2^*g_2$ be the associated Riemannian metric on $M$. Let $(E_i,h_i)$ be Hermitian
vector bundles over $M_i$ and let
$E:=\pi_1^*E_1\otimes\pi_2^*E_2$ and $h:=\pi_1^*h_1\otimes\pi_2^*h_2$
define the associated vector bundle and Hermitian inner product over $M$. Let 
$D=D_1\otimes\operatorname{id}+\operatorname{id}\otimes D_2$ be an operator of Laplace type
on $C^\infty(E)$ over $M$
where $D_i$ are operators of Laplace type on $C^\infty(E_i)$ over $M_i$. 
We have $e^{-tD}=e^{-tD_1}\otimes e^{-tD_2}$ and the associated kernel function is given by
$$
K(t,(x_1,x_2),(y_1,y_2),D)=K(t,x_1,y_1,D_1)\otimes K(t,x_2,y_2,D_2)\,.
$$
We multiply the resulting asymptotic expansions for the heat kernels
and equate coefficients of $t$ to obtain corresponding local
expressions
\begin{equation}\label{E2.a}
a_{m,2n}(D)(x_1,x_2)=\sum_{n_1+n_2=n}a_{m_1,2n_1}(D_1)(x_1)\cdot a_{m_2,2n_2}(D_2)(x_2)\,.
\end{equation}
Let $\mathcal{E}_1$ and $\mathcal{E}_2$ be elliptic complexes of Dirac type over $M_1$ and $M_2$,
respectively. The elliptic complex of Dirac type $\mathcal{E}:=\mathcal{E}_1\otimes\mathcal{E}_2$
over $M$ is defined by setting 
\begin{eqnarray*}
&&E_k=\oplus_{i+j=k}\pi_1^*(E_{1,i})\otimes\pi_2^*(E_{2,j}),\\
&&d_k=\oplus_{i+j=k}d_{1,i}\otimes\operatorname{id}+(-1)^i\operatorname{id}\otimes d_{2,j}\,;
\end{eqnarray*}
the factor of $(-1)^i$ is present to ensure $d^2=0$.
The associated operators of Laplace type then take the form
$D_k=\oplus_{i+j=k}D_{1,i}\otimes\operatorname{id}+\operatorname{id}\otimes D_{2,j}$
and consequently taking the super trace and applying Equation~(\ref{E2.a}) yields
\begin{equation}\label{E2.b}
a_{m,2n}(\mathcal{E})(x_1,x_2)
=\sum_{n_1+n_2=n}a_{m_1,2n_1}(\mathcal{E}_1)(x_1)\cdot a_{m_2,2n_2}(\mathcal{E}_2)(x_2)\,.
\end{equation}

We now turn to the Dolbeault complex. Let $\mathcal{M}_1=(M_1,g_1,J_1,E_1,h_1,\omega_1)$ and
$\mathcal{M}_2=(M_2,g_2,J_2,E_2,h_2,\omega_2)$ be given. Let 
$$
M:=M_1\times M_2,\ g:=\pi_1^*g_1+\pi_2^*g_2,\ E:=\pi_1^*E_1\otimes\pi_2^*E_2,\ h:=\pi_1^*h_1\otimes\pi_2^*h_2
$$
be as given above. Let
$\omega:=\pi_1^*\omega_1+\pi_2^*\omega_2$. We have that
$$
J:=\pi_1^*J_1\oplus\pi_2^*J_2\text{ on }T(M)=\pi_1^*T(M_1)\oplus\pi_2^*T(M_2)
$$
is an integrable almost complex structure on $M$; the auxiliary bundle $E$ is then holomorphic. We 
set $\mathcal{M}=\mathcal{M}_1\times\mathcal{M}_2=(M,g,J,E,h,\omega)$
and obtain that
$$\mathcal{E}_{\operatorname{Dol}}(\mathcal{M})=
\mathcal{E}_{{\operatorname{Dol}}}(\mathcal{M}_1)\otimes
\mathcal{E}_{{\operatorname{Dol}}}(\mathcal{M}_2)\,.$$
Equation~(\ref{E2.b})
then yields a corresponding decomposition of the local heat trace invariants
\begin{equation}\label{E2.c}
a_{m,2n}(\mathcal{E}_{\operatorname{Dol}}(\mathcal{M}))
\displaystyle=\sum_{n_1+n_2=n}a_{m_1,2n_1}(\mathcal{E}_{\operatorname{Dol}}(\mathcal{M}_1))\ 
a_{m_2,2n_2}(\mathcal{E}_{\operatorname{Dol}}(\mathcal{M}_2))
\end{equation}

\section{Normalizing the coordinates and the local frame}\label{S3}
Let $\vec z=(z^1,\dots z^\mm)$ where $z^\alpha=x^\alpha+\sqrt{-1}y^\alpha$ is a system of local holomorphic coordinates on $M$. 
Let
$$
\frac\partial{\partial_{z^\alpha}}:=\frac12\left(\frac{\partial}{\partial x^\alpha}-\sqrt{-1}\frac{\partial}{\partial y^\alpha}\right)
\text{ and }\frac\partial{\partial_{\bar z^\alpha}}:=
\frac12\left(\frac{\partial}{\partial x^\alpha}+\sqrt{-1}\frac{\partial}{\partial y^\alpha}\right)\,.
$$
We extend $g$ to a symmetric bilinear form on the complex tangent bundle; the condition that $g$ is $J$
invariant, then yields 
$$
g\left(\frac\partial{\partial z^\alpha},\frac\partial{\partial z^\beta}\right)=
g\left(\frac\partial{\partial\bar z^\alpha},\frac\partial{\partial\bar z^\beta}\right)=0\quad\text{so}\quad
g_{\alpha\bar\beta}:=g\left(\frac\partial{\partial z^\alpha},\frac\partial{\partial\bar z^\beta}\right)
$$
defines a positive definite Hermitian form. Introduce formal variables
$$
g_{\alpha_0\bar\beta_0/\alpha_1\dots\alpha_j\bar\beta_1\dots\bar\beta_k}:=
\frac\partial{\partial_{z^{\alpha_1}}}\dots\frac\partial{\partial_{z^{\alpha_j}}}
\frac\partial{\partial_{\bar z^{\beta_1}}}\dots\frac\partial{\partial_{\bar z^{\beta_k}}}
g\left(\frac\partial{\partial_{z^{\alpha_0}}},\frac\partial{\partial_{\bar z^{\beta_0}}}\right)
$$
for the holomorphic
and anti-holomorphic derivatives of the components of $g$ where there are no holomorphic derivatives if $j=0$
and no anti-holomorphic derivatives if $k=0$. We may express the K\"ahler form $\Omega(X,Y)=g(X,JY)$ as
$$
\Omega=\frac1{2\sqrt{-1}}
\sum_{\alpha_0=1}^\mm\sum_{\beta_0=1}^{\mm}g_{\alpha_0\bar\beta_0}dz^{\alpha_0}\wedge d\bar z^{\beta_0}\,.$$
We say $(M,g,J)$ is {\it K\"ahler} if $d\Omega=0$ and we impose this condition henceforth.
This condition is equivalent to the symmetries:
$$
g_{\alpha_0\bar\beta_0/\alpha_1}=g_{\alpha_1\bar\beta_0/\alpha_0}\text{ and }
g_{\alpha_0\bar\beta_0/\bar\beta_1}=g_{\alpha_0\bar\beta_1/\bar\beta_0}\,.
$$
We can differentiate these relations to see the variables $g_{\alpha_0\bar\beta_0/\alpha_1\dots\alpha_j\bar\beta_1\dots\bar\beta_k}$ are
symmetric in $\{\alpha_0\dots\alpha_j\}$ and in $\{\beta_0\dots\beta_k\}$. Set
\begin{equation}\label{E3.a}
g_{(\alpha_0\dots\alpha_j;\bar\beta_0\dots\bar\beta_k)}:=g_{\alpha_0\bar\beta_0/\alpha_1\dots\alpha_j\bar\beta_1\dots\bar\beta_k}\,.
\end{equation}
If $\sigma_1$ and $\sigma_2$ are permutations of $j+1$ and $k+1$ indices, respectively, then
\begin{equation}\label{E3.b}
g_{(\alpha_0\dots\alpha_j;\bar\beta_0\dots\bar\beta_k)}=g_{(\alpha_{\sigma_1(0)}\dots\alpha_{\sigma_1(j)};\bar\beta_{\sigma_2(0)}\dots\bar\beta_{\sigma_2(k)})}\,.
\end{equation}
Similarly introduce formal variables
\begin{equation}\label{E3.c}
h_{(p\bar q;\alpha_1\dots\alpha_j;\bar\beta_1\dots\bar\beta_k)}:=h_{p\bar q/\alpha_1\dots\alpha_j\bar\beta_1\dots\bar\beta_k}
\end{equation}
for the derivatives of
the components of the Hermitian metric on $E$.  If $\sigma_3$ and $\sigma_4$ are permutations of $j$ and $k$ indices, respectively, then
\begin{equation}\label{E3.d}
h_{(p\bar q;\alpha_1\dots\alpha_j;\bar\beta_1\dots\bar\beta_k)}
=h_{(p\bar q;\alpha_{\sigma_3(1)}\dots\alpha_{\sigma_3(j)};\bar\beta_{\sigma_4(1)}\dots\bar\beta_{\sigma_4(k)})}.
\end{equation}
Since $\partial\omega=0$, $\omega_{\alpha_0/\alpha_1}=\omega_{\alpha_1/\alpha_0}$ and similarly we set
\begin{equation}\label{E3.e}\begin{array}{l}
\omega_{(\alpha_0\dots\alpha_j;\bar\beta_1\dots\bar\beta_k)}
=\omega_{\alpha_0/\alpha_1\dots\alpha_j\bar\beta_1\dots\bar\beta_k},\\[0.05in]
\bar\omega_{(\alpha_1\dots\alpha_j;\bar\beta_0\dots\bar\beta_k)}
=\bar\omega_{\bar\beta_0/\alpha_1\dots\alpha_j\bar\beta_1\dots\bar\beta_k}.
\end{array}\end{equation}
If $\sigma_5$ is a permutation of $j+1$ indices, $\sigma_6$ is a permutation of $k$ indices, 
$\sigma_7$ is a permutation of $j$ indices, and $\sigma_8$ is a permutation of $k+1$ indices, then
\begin{equation}\label{E3.f}
\begin{array}{l}
\omega_{(\alpha_0\dots\alpha_j;\bar\beta_1\dots\bar\beta_k)}
=\omega_{(\alpha_{\sigma_5(0)}\dots\alpha_{\sigma_5(j)};\bar\beta_{\sigma_6(1)}\dots\bar\beta_{\sigma_6(k)})},\\[0.05in]
\bar\omega_{(\alpha_1\dots\alpha_j;\bar\beta_0\dots\bar\beta_k)}
=\bar\omega_{(\alpha_{\sigma_7(1)}\dots\alpha_{\sigma_7(j)};\bar\beta_{\sigma_8(0)}\dots\bar\beta_{\sigma_8(k)})}.
\end{array}\end{equation}

Lemma~2 of \cite{G73a} yields the following result.
\begin{lemma}\label{L3.1}
 Let $\mathcal{M}=(M,g,J,\omega,E,h)$. Fix a point $z_0\in M$ and a positive integer $N$. There is a holomorphic coordinate system $\vec z=(z^1,\dots,z^\mm)$
centered at $z_0$ and a holomorphic frame $\vec e$ for $E$ defined near $z_0$ so that
\begin{enumerate}
\item $g_{\alpha\bar\beta}(z_0)=\delta_{\alpha\beta}$ and $h_{p\bar q}(z_0)=\delta_{p,q}$.
\item $g_{(\alpha_0\dots\alpha_j;\bar\beta_0)}(z_0)=g_{(\alpha_0;\bar\beta_0\dots\bar\beta_k)}(z_0)=0$ for
$1\le j,k\le N$.
\item  $h_{(p\bar q;\alpha_1\dots\alpha_j)}(z_0)=h_{(p\bar q;\bar\beta_1\dots\bar\beta_k)}(z_0)=0$
for $1\le j,k\le N$.
\end{enumerate}\end{lemma}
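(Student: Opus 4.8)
Up to the presence of $\omega$, this is essentially Lemma~2 of \cite{G73a}, and the plan is to reproduce that argument while checking that $\omega$ never enters: it does not occur in the conclusion, and all the normalizing changes to be used are holomorphic, under which a form of type $(1,0)$ stays a form of type $(1,0)$. Throughout one would work with Taylor jets at $z_0$ up to a sufficiently high finite order (depending on $N$), so that every construction reduces to solving finitely many linear equations order by order; the resulting formal power series are then truncated to polynomials, and near $z_0$ a polynomial coordinate change with identity $1$-jet at $z_0$ is still a biholomorphism while a polynomial change of frame with value the identity at $z_0$ is still invertible. The first step is trivial: starting from any holomorphic chart centered at $z_0$ and any holomorphic frame for $E$, a constant $\mathbb{C}$-linear change of coordinates brings $g_{\alpha\bar\beta}(z_0)$ to $\delta_{\alpha\beta}$ and a constant change of frame brings $h_{p\bar q}(z_0)$ to $\delta_{pq}$, which is~(1); and since every later change fixes the $1$-jet of the coordinates, resp.\ the value of the transition matrix, at $z_0$, property~(1) is preserved under them.

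The second step, constructing K\"ahler normal coordinates so as to obtain~(2), is the heart of the matter and the only place the K\"ahler hypothesis is used. Because $\Omega$ is a closed real $(1,1)$-form, on a coordinate polydisk about $z_0$ it has a smooth real potential $f$ with $g_{\alpha\bar\beta}$ a fixed nonzero constant times $\partial^2 f/\partial z^\alpha\partial\bar z^\beta$. The purely holomorphic and purely antiholomorphic parts of the Taylor expansion of $f$ at $z_0$ are pluriharmonic, do not affect $g_{\alpha\bar\beta}$, and may be discarded; after rescaling, the bidegree-$(1,1)$ part is $\sum_\alpha |z^\alpha|^2$. I would then remove, by a holomorphic change of coordinates $z=z(w)$ with identity $1$-jet, the Taylor terms of $f$ of bidegree $(p,1)$ with $p\ge 2$ --- and hence, by reality of $f$, also those of bidegree $(1,q)$ with $q\ge 2$ --- building the change order by order and truncating to a polynomial. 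Once this is done, every Taylor term of $f$ beyond $\sum|w^\alpha|^2$ has both bidegrees $\ge 2$ to the relevant order, hence every Taylor term of $g_{\alpha\bar\beta}-\delta_{\alpha\beta}$ has both bidegrees $\ge 1$, which is exactly~(2) through order $N$. The solvability of the removal step --- and the point where a non-K\"ahler metric would obstruct the construction --- is precisely where the K\"ahler symmetries~(\ref{E3.b}) (equivalently, the existence of $f$) are needed; already at first order it requires $g_{\alpha_0\bar\beta_0/\alpha_1}=g_{\alpha_1\bar\beta_0/\alpha_0}$. Alternatively one could bypass the potential and build $w=w(z)$ directly, the solvability at each stage again being~(\ref{E3.b}).

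For the third step I would keep these coordinates --- the second step does not involve $h$ and leaves $g$ normalized --- and look for a new holomorphic frame related to the current one by a holomorphic matrix function $A$ with $A(z_0)=I$, chosen so that $h_{(p\bar q;\alpha_1\dots\alpha_j)}(z_0)=0$ for $1\le j\le N$. Since $A$ is holomorphic, holomorphic differentiation passes through $A$ and annihilates $\bar A$, so the degree-$m$ purely holomorphic part of the Taylor expansion at $z_0$ of $h_{p\bar q}$ in the new frame equals the degree-$m$ part of $A^q_p$ plus the old degree-$m$ purely holomorphic part of $h_{p\bar q}$ plus a correction involving only jets of order $<m$. As the Taylor coefficients of the holomorphic function $A^q_p$ may be prescribed freely, these equations are solved inductively in $m$ with no obstruction, and $A$ is truncated to a polynomial. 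The Hermitian symmetry $h_{p\bar q}=\overline{h_{q\bar p}}$ then forces $h_{(p\bar q;\bar\beta_1\dots\bar\beta_k)}(z_0)=0$ as well, giving~(3).

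In summary, the only genuinely nontrivial point is the second step: it is the K\"ahler condition, in the guise of a local potential --- equivalently, of the symmetries~(\ref{E3.b}) --- that lets a holomorphic coordinate change absorb the holomorphic jets of the metric. The normalization of the $0$-jets and of the fiber metric is purely formal, and $\omega$ plays no part in any of it.
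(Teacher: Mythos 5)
Your reconstruction is correct. The paper gives no proof of its own here but simply cites Lemma~2 of \cite{G73a}; that result is precisely the K\"ahler normal-coordinates (Bochner-coordinates) statement together with the standard normalization of a Hermitian metric by a holomorphic frame change, and your argument --- normalize the $0$-jets by a constant change, then use the local K\"ahler potential (or, equivalently, the symmetries~(\ref{E3.b}) directly) to kill the purely holomorphic jets of $g_{\alpha\bar\beta}$ order by order via a holomorphic coordinate change with identity $1$-jet, then kill the purely holomorphic jets of $h_{p\bar q}$ by a holomorphic frame change with $A(z_0)=I$, invoking reality/Hermitian symmetry for the antiholomorphic halves --- is exactly that argument. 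You also correctly identify the one point the paper implicitly needs beyond the citation, namely that $\omega$ neither appears in the conclusion nor interferes with any of the holomorphic normalizations, so the earlier lemma applies verbatim.
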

In other words, these variables vanish at the basepoint if 
either there are no holomorphic or there are no anti-holomorpic derivatives.
With these normalizations, the variables $g_{(\cdot;\cdot)}$, $h_{(p\bar q;\cdot;\cdot)}$,
$\omega_{(\cdot;\cdot)}$, and $\bar\omega_{(\cdot;\cdot)}$ are tensorial; we have reduced
the structure group to the unitary groups $U(\mm)$ and $U(\dim(E))$ modulo transformations of order $O(|z|^{N+1})$.

\section{Spaces of invariants}\label{S4}
We must be rather precise in what is meant by a ``local invariant" or a ``local formula". We do this as follows.

\subsection{The algebra $\mathfrak{A}$} We introduce the polynomial algebra $\mathfrak{A}_\mm$
in the variables of Equations~(\ref{E3.a}), (\ref{E3.c}), and
(\ref{E3.e}) and impose the relations of Equations~(\ref{E3.b}), (\ref{E3.d}), (\ref{E3.f}), and
Lemma~\ref{L3.1}:
\medbreak\qquad\quad
$\mathfrak{A}_\mm:=\mathbb{C}[g_{(\alpha_0\dots\alpha_{j_1};\bar\beta_0\dots\bar\beta_{k_1})},
h_{(p\bar q;\alpha_1\dots\alpha_{j_2};\bar\beta_1\dots\bar\beta_{k_2})},
\omega_{(\alpha_0\dots\alpha_{j_3};\bar\beta_1\dots\bar\beta_{k_3})},$
\medbreak\qquad\qquad\qquad\quad$
\bar\omega_{(\alpha_1\dots\alpha_{j_4};\bar\beta_0\dots\bar\beta_{k_4})}]$
for $j_1\ge1$, $k_1\ge 1$, $j_2\ge1$, $k_2\ge 1$,
\medbreak\qquad\qquad\qquad\quad$j_3\ge0$, $k_3\ge0$, $j_4\ge0$, and $k_4\ge0$.
\medbreak\noindent
If $k_3=0$, there are no anti-holomorphic derivatives of $\omega$ and if $j_4=0$, there are no
anti-holomorphic derivatives of $\bar\omega$.
We introduce the complex dimension $\mm$ into the notation as it plays
an important role; we suppress the fiber dimension of $E$ in the interests of notational simplicity. 

If $P\in\mathfrak{A}_\mm$ and if $A$ is a monomial,
we let $c(A,P)$ be the coefficient of $A$
in $P$ and express $P=\sum_Ac(A,P)P$. We say {\it $A$ is a monomial of $P$} or that
{\it $A$ appears in $P$} if $c(A,P)\ne0$. The maps $P\rightarrow c(A,P)$ are linear maps from $\mathfrak{A}_\mm$ to
$\mathbb{C}$.

\subsection{The weight} To count the number of derivatives of $g$ and $h$, we set:
$$
\operatorname{weight}\{g_{(\alpha_0\dots\alpha_j;\bar\beta_0\dots\bar\beta_k)}\}
=\operatorname{weight}\{h_{(p\bar q;\alpha_1\dots\alpha_j;\bar\beta_1\dots\bar\beta_k)}\}:=j+k\,.
$$
Since the 1-forms $\omega$ and $\bar\omega$ appear as $0^{\operatorname{th}}$ perturbations of
$\bar\partial$ and the adjoint $\bar\partial^*$, we give $\omega$ and $\bar\omega$ weight 1 and
define
$$
\operatorname{weight}\{\omega_{(\alpha_0\dots\alpha_j;\bar\beta_1\dots\bar\beta_k)}\}=
\operatorname{weight}\{\bar\omega_{(\alpha_1\dots\alpha_j;\bar\beta_0\dots\bar\beta_k)}\}:=1+j+k\,.
$$
Distinguish the variables of weight 1 and set
$$
\Xi_{(\alpha_1\dots\alpha_e;\bar\beta_1\dots\bar\beta_f)}:=\omega_{\alpha_1}\dots\omega_{\alpha_e}\bar\omega_{\bar\beta_1}\dots\bar\omega_{\bar\beta_f}
$$
where there are no holomorphic indices if $e=0$, no anti-holomorphic indices if $f=0$, and $\Xi=1$ if $e=f=0$; we set
$
\operatorname{weight}\{\Xi_{(\alpha_1\dots\alpha_e;\bar\beta_1\dots\bar\beta_f)}\}:=e+f\,.
$

\subsection{Monomials}
Let $U_*$ and $\bar V_*$ be (possibly empty) collections of holomorphic and anti-holomorphic indices, 
respectively. If $A$ is a monomial, we express
\begin{equation}\label{E4.a}\begin{array}{l}
A=g_{(U_1;\bar V_1)}\dots g_{(U_a;\bar V_a)}\ h_{(p_1\bar q_1;U_{a+1};\bar V_{a+1})}\dots 
h_{(p_b\bar q_b;U_{a+b};\bar V_{a+b})}\\[0.05in]
\quad\cdot\omega_{(U_{a+b+1};\bar V_{a+b+1})}\dots\omega_{(U_{a+b+c};\bar V_{a+b+c})}\ 
\bar\omega_{(U_{a+b+c+1};\bar V_{a+b+c+1})}\dots\\[0.05in]
\quad\cdot\bar\omega_{(U_{a+b+c+d};\bar V_{a+b+c+d})}\Xi_{(U_{a+b+c+d+1};\bar V_{a+b+c+d+1})}\,.
\end{array}\end{equation}
In this expression, the variables in $g_*$, $h_*$, $\omega_*$, and $\bar\omega_*$ 
have weight at least 2; we distinguish the variables of weight 1 separately in $\Xi$;
since we have imposed the relations of Lemma~\ref{L3.1}, all the variables defining $\mathfrak{A}_\mm$
have positive weight.

We extend the notion of weight to be the sum of the weights of the variables
comprising $A$. If $A$ has the form given in Equation~(\ref{E4.a}), then
\begin{eqnarray*}
\operatorname{weight}(A)&=&\sum_{i=1}^a\operatorname{weight}\{g_{(U_i;\bar V_i)}\}
+\sum_{i=a+1}^{a+b}\operatorname{weight}\{h_{(p_i\bar q_i;U_{a+b};\bar V_{a+b})}\}\\
&+&\sum_{i=a+b+1}^{a+b+c}\operatorname{weight}\{\omega_{(U_i;\bar V_i)}\}
+\sum_{i=a+b+c+1}^{a+b+c+d}\operatorname{weight}\{\bar\omega_{(U_i;\bar V_i)}\}+e+f\,.
\end{eqnarray*}
We say that a polynomial $P$ is {\it homogeneous of weight $2n$} if all the monomials of $P$
have weight $2n$.

\subsection{The length} If $A$ has the form given in Equation~(\ref{E4.a}), we define  the  length $\ell(A)$ by setting
$$
\ell(A):=\left\{\begin{array}{cl}a+b+c+d&\text{if }\Xi=1\\a+b+c+d+1&\text{if }\Xi\ne1\end{array}\right\}\,.
$$

\subsection{Local invariants}
If $\vec z$ is a normalized coordinate system on $M$, if $\vec s$ is a normalized local holomorphic frame
for $E$, and if $P\in\mathfrak{A}_\mm$, we evaluate $P(\mathcal{M})(z_0)(\vec z,\vec s)$ in the obvious fashion.
If $P(\mathcal{M})(z_0):=P(\mathcal{M})(z_0)(\vec z,\vec s)$
is independent of the particular normalized coordinate system $\vec z$ and normalized frame $\vec s$
for any $\mathcal{M}$ and any $z_0$, then we shall
say that $P$ is {\it invariant}. The scalar curvature $\tau$ and the heat trace asymptotics 
$a_{m,2n}(\mathcal{E}_{\operatorname{Dol}})$ are invariant.
\begin{definition}\rm
Let $\mathcal{P}_{\mm,2n}$ be the subspace of $\mathfrak{A}_\mm$
of invariant polynomials which are homogeneous of weight $2n$; there are no invariant polynomials of odd weight.
\end{definition}

\begin{example}\rm The scalar curvature $\tau$ is an element
of $\mathcal{P}_{\mm,2}$ since $\tau$ is linear in the 2-jets of the metric and
quadratic in the 1-jets of the metric with coefficients which are smooth functions of the metric tensor. 
\end{example}

The following observation follows from the explicit combinatorial algorithm given by Seeley~\cite{S68} 
for computing the heat trace invariants.
\begin{lemma}\label{L4.3}
$a_{m,2n}(\mathcal{E}_{\operatorname{Dol}})\in\mathcal{P}_{\mm,2n}$.
\end{lemma}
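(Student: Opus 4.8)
The plan is to verify the three conditions implicit in membership in $\mathcal{P}_{\mm,2n}$ in turn. Write $P:=a_{m,2n}(\mathcal{E}_{\operatorname{Dol}})=\sum_i(-1)^i a_{m,2n}(D_i)$, where $D_i=(\sqrt{2}\,\bar\partial_{\bar\omega})^*(\sqrt{2}\,\bar\partial_{\bar\omega})+(\sqrt{2}\,\bar\partial_{\bar\omega})(\sqrt{2}\,\bar\partial_{\bar\omega})^*$ acts on $C^\infty(\Lambda^{(0,i)}\otimes E)$, so that it suffices to analyze each $D_i$ and then take the super-trace over $i$. First I would invoke Seeley's explicit algorithm \cite{S68} (see also \cite{G95}): for $D$ of Laplace type, $a_{m,2n}(D)(x)$ is a universal polynomial in the jets at $x$ — of order bounded in terms of $n$ — of the coefficients $g^{ij}$, $A^k$, $B$ of $D$, with coefficients that become universal constants once $g^{ij}(x)=\delta^{ij}$. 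It then remains to check that, in a normalized coordinate system and holomorphic frame as in Lemma~\ref{L3.1}, the total symbol of $D_i$ and all of its coordinate derivatives at $z_0$ are polynomial in the variables $g_{(\cdot;\cdot)}$, $h_{(p\bar q;\cdot;\cdot)}$, $\omega_{(\cdot;\cdot)}$, $\bar\omega_{(\cdot;\cdot)}$ of Equations~(\ref{E3.a}), (\ref{E3.c}), (\ref{E3.e}). Indeed $g^{ij}$ inverts a Hermitian matrix equal to the identity at $z_0$, so its jets are polynomial in the jets of $g$; the connection form of the metric connection on $\Lambda^{(0,i)}$ and the form $h^{-1}\partial h$ of the Chern connection on $E$ are polynomial in the jets of $g$ and $h$; and the zeroth order perturbation $\operatorname{ext}(\bar\omega)$ and its adjoint contribute to $A^k$ only terms linear in $\omega,\bar\omega$ and to $B$ only terms bilinear in $\omega,\bar\omega$ together with terms linear in $\partial\bar\omega$ — here $\bar\partial\bar\omega=0$ gives $d\bar\omega=\partial\bar\omega$, so only the variables $\bar\omega_{(\alpha;\bar\beta_0)}$ occur among those derivatives. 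Since only finitely many jets enter, $P\in\mathfrak{A}_\mm$, and it respects the relations (\ref{E3.b}), (\ref{E3.d}), (\ref{E3.f}) and the vanishing of Lemma~\ref{L3.1} automatically, being expressed in those variables.

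Next I would prove that $P$ is homogeneous of weight $2n$ by a scaling argument in the spirit of \cite{G73a}. For $\lambda>0$ let $\mathcal{M}_\lambda$ denote the data $(M,\lambda^{-2}g,J,\omega,E,h)$ expressed in the coordinates $w=z/\lambda$ and the matching frame; the conditions $\partial\omega=0$ and $d\Omega=0$ are preserved, as is the normalization of Lemma~\ref{L3.1}. A direct check shows that every normalized variable of weight $r$ is multiplied by $\lambda^{r}$ under $\mathcal{M}\mapsto\mathcal{M}_\lambda$; the extra factor of $\lambda$ on the components $\omega_\alpha$, $\bar\omega_{\bar\beta}$, which they carry because $\omega$, $\bar\omega$ are $1$-forms, is precisely the ``$+1$'' in their assigned weight. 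On the other hand, since $\operatorname{ext}(\bar\omega)$ is metric independent and in each term of $D_i$ is composed with an operator that rescales as a formal adjoint, one has $D_i(\lambda^{-2}g,\omega)=\lambda^{2}D_i(g,\omega)$ as differential operators, so the usual scaling of the heat kernel gives $a_{m,2n}(\mathcal{E}_{\operatorname{Dol}}(\mathcal{M}_\lambda))(z_0)=\lambda^{2n}\,a_{m,2n}(\mathcal{E}_{\operatorname{Dol}}(\mathcal{M}))(z_0)$. Writing $P=\sum_A c(A,P)\,A$ and equating the two resulting expressions for $P(\mathcal{M}_\lambda)(z_0)$, namely $\sum_A c(A,P)\lambda^{\operatorname{weight}(A)}A(\mathcal{M})(z_0)=\lambda^{2n}\sum_A c(A,P)A(\mathcal{M})(z_0)$ for all $\lambda>0$ and all $\mathcal{M}$, and using that distinct monomials can be varied independently, forces $c(A,P)=0$ unless $\operatorname{weight}(A)=2n$.

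Finally, invariance is immediate: $a_{m,2n}(D_i)(z_0)$ is a coefficient in the small-$t$ expansion of $\operatorname{Tr}_{E_{z_0}}K(t,z_0,z_0,D_i)$, which is defined with no reference to coordinates or frames, so $P(\mathcal{M})(z_0)=\sum_i(-1)^i a_{m,2n}(D_i)(z_0)$ is the same for every admissible normalization; hence $P\in\mathcal{P}_{\mm,2n}$. The step that requires care — and the only place where the specific geometry of the Witten-deformed Dolbeault complex really enters — is the second half of the first paragraph: one must confirm that the $\omega$-dependent pieces of the symbol of $D_i$, coming from $\operatorname{ext}(\bar\omega)$ and its adjoint, land in exactly the variables of $\mathfrak{A}_\mm$ and carry the weights for which Seeley's weight grading (jets of $g^{ij}$, $A^k$, $B$ of order $r$ having weight $r$, $r+1$, $r+2$) matches the weight prescribed in Section~\ref{S4}, so that the scaling argument closes up to give $2n$ rather than some other value. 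Given the normalizations and bookkeeping of Section~\ref{S3} this verification is mechanical, but it is the substantive content of the lemma.
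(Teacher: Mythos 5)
Your argument follows the same route as the paper, which simply remarks that the lemma is a consequence of Seeley's explicit combinatorial algorithm \cite{S68}; you have fleshed out the three conditions (membership in $\mathfrak{A}_\mm$, homogeneity of weight $2n$ via scaling, and coordinate-frame invariance) that the paper treats as immediate. One minor imprecision: cross terms such as $\bar\partial\,\operatorname{ext}(\bar\omega)^*$ also produce $\bar\partial\omega$ contributions to $B$, not only $\partial\bar\omega$, so both $\omega_{(\alpha_0;\bar\beta_1)}$ and $\bar\omega_{(\alpha_1;\bar\beta_0)}$ occur among the first derivatives — but both are admissible variables of $\mathfrak{A}_\mm$, so the conclusion is unaffected.
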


\section{The restriction map}\label{S5}

\subsection{The degree} Let $\operatorname{deg}_\alpha$ and $\operatorname{deg}_{\bar\beta}$ be the
total number
of times the index $\alpha$ or $\bar\beta$ appears in one of the variables comprising $\mathfrak{A}$:
\smallbreak\quad$\displaystyle\operatorname{deg}_\alpha\{g_{(\alpha_0\dots\alpha_j;\bar\beta_0\dots\bar\beta_k)}\}
=\sum_{\nu=0}^j\delta_{\alpha\alpha_\nu}$,\quad\ \ \ 
$\displaystyle\operatorname{deg}_{\bar\beta}\{g_{(\alpha_0\dots\alpha_j;\bar\beta_0\dots\bar\beta_k)}\}
=\sum_{\nu=0}^k\delta_{\bar\beta\bar\beta_\nu}$,
\smallbreak\quad$\displaystyle\operatorname{deg}_\alpha\{h_{(p\bar q;\alpha_1\dots\alpha_j;\bar\beta_1\dots\bar\beta_k)}\}
=\sum_{\nu=1}^j\delta_{\alpha\alpha_\nu}$,\quad
$\displaystyle\operatorname{deg}_{\bar\beta}\{h_{(p\bar q;\alpha_1\dots\alpha_j;\bar\beta_1\dots\bar\beta_k)}\}
=\sum_{\nu=1}^k\delta_{\bar\beta\bar\beta_\nu}$,
\smallbreak\quad$\displaystyle\operatorname{deg}_\alpha\{\omega_{(\alpha_0\dots\alpha_j;\bar\beta_1\dots\bar\beta_k)}\}
=\sum_{\nu=0}^j\delta_{\alpha\alpha_\nu}$,\quad\ \ 
$\displaystyle\operatorname{deg}_{\bar\beta}\{\omega_{(\alpha_0\dots\alpha_j;\bar\beta_1\dots\bar\beta_k)}\}
=\sum_{\nu=1}^k\delta_{\bar\beta\bar\beta_\nu}$,
\smallbreak\quad$\displaystyle\operatorname{deg}_\alpha\{\bar\omega_{(\alpha_1\dots\alpha_j;\bar\beta_0\dots\bar\beta_k)}\}
=\sum_{\nu=1}^j\delta_{\alpha\alpha_\nu}$,\quad\ \ 
$\displaystyle\operatorname{deg}_{\bar\beta}\{\bar\omega_{(\alpha_1\dots\alpha_j;\bar\beta_0\dots\bar\beta_k)}\}
=\sum_{\nu=0}^k\delta_{\bar\beta\bar\beta_\nu}$,
\smallbreak\quad$\displaystyle\operatorname{deg}_\alpha\{\Xi_{(\alpha_1\dots\alpha_e;\bar\beta_1\dots\bar\beta_f)}\}
=\sum_{\nu=1}^e\delta_{\alpha\alpha_\nu}$,\quad\ \
$\displaystyle\operatorname{deg}_{\bar\beta}\{\Xi_{(\alpha_1\dots\alpha_e;\bar\beta_1\dots\bar\beta_f)}\}
=\sum_{\nu=1}^f\delta_{\beta\beta_\nu}$.
\medbreak\noindent As with the weight, we extend the degree by summing over the variables
comprising $A$. If $A$ has the form given in Equation~(\ref{E4.a}), then
\begin{eqnarray*}
\operatorname{deg}_\star(A)&=&\sum_{i=1}^a\operatorname{deg}_\star\{g_{(U_i;\bar V_i})\}
+\sum_{i=a+1}^{a+b}\operatorname{deg}_\star\{h_{(p_i\bar q_i;U_{a+b};\bar V_{a+b})}\}\\
&+&\sum_{i=a+b+1}^{a+b+c}\operatorname{deg}_\star\{\omega_{(U_i;\bar V_i)}\}
+\sum_{i=a+b+c+1}^{a+b+c+d}\operatorname{deg}_\star\{\bar\omega_{(U_i;\bar V_i)}\}\\
&+&\operatorname{deg}_\star\{\Xi_{(U_{a+b+c+d+1};\bar V_{a+b+c+d+1})}\}.
\end{eqnarray*}
If an index does not appear in a monomial, 
we set the degree to zero.

\subsection{Product with a flat torus} Let $\mathbb{T}$ be the flat 2-dimensional torus with $E$ trivial and 
$\omega=0$. If $\mathcal{N}$ has complex dimension $\mm-1$, we set
$\mathcal{M}=\mathcal{N}\times\mathbb{T}$. 
If $P$ is an invariant local formula in complex dimension $\mm$, then
the natural association $\mathcal{N}\rightarrow\mathcal{M}$ 
defines dually an invariant local formula $r(P)$ in dimension $\mm-1$ so that
$$
r(P)(\mathcal{N})(z_1)=P(\mathcal{N}\times\mathbb{T})(z_1,z_2)\,;
$$
the point $z_2\in\mathbb{T}$ that is chosen is irrelevant since $\mathbb{T}$ is homogeneous.
Restriction defines a linear map
$$
r:\mathcal{P}_{\mm,2n}\rightarrow\mathcal{P}_{\mm-1,2n}\,.
$$

\begin{example}\rm The scalar curvature in dimension $2\mm$ is defined by summing over repeated indices relative to a local
orthonormal frame
$$
\tau=\sum_{i,j=1}^{2\mm}R_{ijji}\,.
$$
The restriction $r(\tau)$ is defined by restricting the range of summation to lie over $1\le i,j\le 2\mm-2$.
\end{example}

\subsection{Algebraic formulation} The restriction map can be defined algebraically. Let $P\in\mathcal{P}_{\mm,2n}$.
It is immediate from the definition that $r(P)=0$ if and only if 
$\operatorname{deg}_{\mm}(A)\ge1$ for every monomial $A$ of $P$. 
Since we can permute the indices, if $P$
is invariant, we have that:

\begin{lemma}\label{L5.2} If $P\in\mathcal{P}_{\mm,2n}$, then 
$r(P)=0$ if and only if  $\operatorname{deg}_\alpha(A)>0$ for every monomial $A$ of $P$
and every index $1\le\alpha\le\mm$.
\end{lemma}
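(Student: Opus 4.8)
The plan is to reduce everything to the elementary observation stated just before the lemma — that $r(P)=0$ if and only if $\operatorname{deg}_{\mm}(A)\ge1$ for every monomial $A$ of $P$ — together with the fact that an invariant polynomial is unchanged under permutations of the holomorphic coordinates.

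The backward implication is free: if $\operatorname{deg}_\alpha(A)>0$ holds for all monomials $A$ of $P$ and all $\alpha$, then taking $\alpha=\mm$ gives $\operatorname{deg}_{\mm}(A)\ge1$ for all $A$, hence $r(P)=0$ by the observation.

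For the forward implication I would argue as follows. Fix $\alpha$ with $1\le\alpha\le\mm$ and consider the coordinate transformation $\pi_\alpha$ that interchanges $z^\alpha\leftrightarrow z^\mm$ (and hence $\bar z^\alpha\leftrightarrow\bar z^\mm$). This is a unitary — in fact permutation — change of holomorphic coordinates, so it carries normalized systems to normalized systems, preserving all the conditions of Lemma~\ref{L3.1}; it therefore induces an action on $\mathfrak{A}_\mm$ that is nothing but the relabeling $\alpha\leftrightarrow\mm$ of every holomorphic and anti-holomorphic index occurring in the variables $g_{(\cdot;\cdot)}$, $h_{(p\bar q;\cdot;\cdot)}$, $\omega_{(\cdot;\cdot)}$, $\bar\omega_{(\cdot;\cdot)}$, $\Xi_{(\cdot;\cdot)}$ (the bundle indices $p,q$ are untouched). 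Since $P$ is invariant, $\pi_\alpha$ fixes $P$, so $c(A,P)=c(\pi_\alpha A,P)$ for every monomial $A$; in particular, if $A$ is a monomial of $P$ then so is $\pi_\alpha A$. Assuming $r(P)=0$, the observation applied to $\pi_\alpha A$ gives $\operatorname{deg}_{\mm}(\pi_\alpha A)\ge1$, and since the relabeling simply swaps the counts of the indices $\alpha$ and $\mm$ we have $\operatorname{deg}_{\mm}(\pi_\alpha A)=\operatorname{deg}_\alpha(A)$. Thus $\operatorname{deg}_\alpha(A)\ge1$ for every monomial $A$ of $P$ and every index $1\le\alpha\le\mm$, which is the assertion.

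The only step with any content is the verification that coordinate permutations are admissible normalized changes inducing the naive index relabeling on $\mathfrak{A}_\mm$; this is already built into the setup of Section~\ref{S3}, where the structure group has been reduced to $U(\mm)$, so I do not expect it to be an obstacle — it is the mildest instance of the $U(\mm)$-invariance that Section~\ref{S6} exploits more substantially. No computation is required anywhere.
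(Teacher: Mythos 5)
Your proposal is correct and follows precisely the route the paper indicates: the paper's entire proof is the remark immediately preceding the lemma ($r(P)=0$ iff $\operatorname{deg}_{\mm}(A)\ge1$ for every monomial $A$) plus the one-line justification ``Since we can permute the indices, if $P$ is invariant, we have that.'' You have simply unpacked that sentence into the explicit permutation $z^\alpha\leftrightarrow z^\mm$ and the consequent relabeling of monomials, which is exactly the intended argument.
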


It is immediate that the heat trace invariants of the Dolbeault
complex vanish on $\mathbb{T}$. Consequently, Equation~(\ref{E2.c}) yields
\begin{lemma}\label{L5.3}
$r(a_{m,2n}(\mathcal{E}_{\operatorname{Dol}}))=0$.
\end{lemma}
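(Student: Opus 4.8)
The plan is to reduce the assertion to a computation on the flat torus by means of the product formula~(\ref{E2.c}). Recall that the restriction map is defined by $r(P)(\mathcal{N})(z_1)=P(\mathcal{N}\times\mathbb{T})(z_1,z_2)$, where $\mathbb{T}$ is the flat $2$-torus with $E$ trivial and $\omega=0$. So, given $\mathcal{N}$ of complex dimension $\mm-1$, I would set $\mathcal{M}=\mathcal{N}\times\mathbb{T}$ (thus $m_1=2\mm-2$, $m_2=2$, $m=2\mm$) and write
$$
r(a_{m,2n}(\mathcal{E}_{\operatorname{Dol}}))(\mathcal{N})(z_1)=a_{m,2n}(\mathcal{E}_{\operatorname{Dol}}(\mathcal{N}\times\mathbb{T}))(z_1,z_2)\,.
$$
Applying Equation~(\ref{E2.c}) to $\mathcal{M}=\mathcal{N}\times\mathbb{T}$ then expresses the right-hand side as
$$
\sum_{n_1+n_2=n}a_{2\mm-2,2n_1}(\mathcal{E}_{\operatorname{Dol}}(\mathcal{N}))(z_1)\cdot a_{2,2n_2}(\mathcal{E}_{\operatorname{Dol}}(\mathbb{T}))(z_2)\,,
$$
so it suffices to show that every factor $a_{2,2n_2}(\mathcal{E}_{\operatorname{Dol}}(\mathbb{T}))$ vanishes, for all $n_2\ge0$.

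For this flat-torus computation I would argue as follows. On $\mathbb{T}$ we have $\omega=0$, so $\bar\partial_{\bar\omega}=\bar\partial$, and the bundles $\Lambda^{0,0}$ and $\Lambda^{0,1}$ are smoothly trivial, the second being trivialized by $d\bar z$; equip them with the flat Hermitian structure coming from the flat metric and the trivial $h$. Since the metric is flat and $E$, $h$ are trivial, every curvature term in the Bochner–Kodaira formula vanishes, so in these frames each associated Laplacian $D_i$ is exactly the scalar flat Laplacian $-\sum_j\partial_j^2$ tensored with the identity (there is no first-order term and no potential). Hence $a_{2,0}(D_i)(x)=(4\pi)^{-1}\operatorname{id}$ and $a_{2,2n_2}(D_i)(x)=0$ for $n_2\ge1$. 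Taking the super-trace, $a_{2,0}(\mathcal{E}_{\operatorname{Dol}}(\mathbb{T}))=(4\pi)^{-1}(\dim\Lambda^{0,0}-\dim\Lambda^{0,1})=(4\pi)^{-1}(1-1)=0$ and $a_{2,2n_2}(\mathcal{E}_{\operatorname{Dol}}(\mathbb{T}))=0$ for $n_2\ge1$. So all heat trace invariants of the Dolbeault complex vanish on $\mathbb{T}$.

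Substituting this back into the displayed sum, every summand carries a factor $a_{2,2n_2}(\mathcal{E}_{\operatorname{Dol}}(\mathbb{T}))=0$, so $a_{m,2n}(\mathcal{E}_{\operatorname{Dol}}(\mathcal{N}\times\mathbb{T}))=0$ for every $\mathcal{N}$ and every $z_1$; by definition of $r$ this is precisely $r(a_{m,2n}(\mathcal{E}_{\operatorname{Dol}}))=0$. I do not expect a serious obstacle here, since the essential structural input was already packaged into the product formula~(\ref{E2.c}); the only point requiring any care is the flat-torus computation, whose crux is that on $\mathbb{T}$ the deformed Dolbeault Laplacian decouples into flat scalar Laplacians on trivial line bundles, so that the super-trace annihilates the sole surviving (weight-zero) term via the cancellation $\dim\Lambda^{0,0}-\dim\Lambda^{0,1}=0$.
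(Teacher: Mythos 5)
Your proof is correct and follows exactly the route the paper takes: the paper simply observes that the heat trace invariants of the Dolbeault complex vanish on the flat torus $\mathbb{T}$ (which you justify by computing that each $D_i$ reduces to the flat scalar Laplacian and that the super-trace kills the single nonzero coefficient) and then invokes the product formula Equation~(\ref{E2.c}). You have merely spelled out the flat-torus computation that the paper declares ``immediate.''
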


\section{Invariance Theory}\label{S6}
The coordinate and frame normalizations 
of Lemma~\ref{L3.1} are invariant under 
the action
of the unitary groups $U(\mm)$ and $U(\dim(E))$; although invariance under $U(\dim(E))$ will play no direct
role in our analysis, it was central to the proof of Theorem~\ref{T1.3}.
We exploit unitary invariance to show

\begin{lemma}\label{L6.1} Let $0\ne P\in\mathcal{P}_{\mm,2n}$. If $A$ is a monomial of $P$, then
\smallbreak\centerline{$\operatorname{deg}_\alpha(A)=\operatorname{deg}_{\bar\alpha}(A)$ for any $\alpha$.}
\end{lemma}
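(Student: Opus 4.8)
The plan is to exploit invariance under the diagonal circle subgroups of $U(\mm)$ that act by scaling individual holomorphic coordinates. Concretely, for a fixed index $\alpha$ and a complex parameter $\lambda$ with $|\lambda|=1$, consider the coordinate change $z^\alpha\mapsto\lambda z^\alpha$, leaving the other coordinates fixed. This preserves the normalizations of Lemma~\ref{L3.1} (it is a unitary transformation), so $P$, being invariant, is unchanged. On the other hand, one tracks how each formal variable transforms under this substitution: a holomorphic derivative $\partial/\partial z^\alpha$ picks up a factor $\lambda^{-1}$, an anti-holomorphic derivative $\partial/\partial\bar z^\alpha$ picks up $\bar\lambda^{-1}=\lambda$, and the components $g_{\alpha_0\bar\beta_0}$, $h_{p\bar q}$, $\omega_{\alpha_0}$, $\bar\omega_{\bar\beta_0}$ transform with a factor $\lambda$ or $\bar\lambda$ according to whether a lower index $\alpha$ or $\bar\alpha$ is present. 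Collecting these, a monomial $A$ is multiplied by $\lambda^{\operatorname{deg}_{\bar\alpha}(A)-\operatorname{deg}_\alpha(A)}$ under this substitution.

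The key computation is to verify that bookkeeping variable by variable. For $g_{(\alpha_0\dots\alpha_j;\bar\beta_0\dots\bar\beta_k)}=g_{\alpha_0\bar\beta_0/\alpha_1\dots\alpha_j\bar\beta_1\dots\bar\beta_k}$: each of the $\operatorname{deg}_\alpha$ occurrences of $\alpha$ among $\{\alpha_0,\dots,\alpha_j\}$ contributes either a $\lambda^{-1}$ from a holomorphic derivative (for $\alpha_1,\dots,\alpha_j$) or a $\lambda$ from the metric component (for $\alpha_0$); but one must be careful — the net exponent should come out uniformly. The cleanest way to see this is to observe that $g_{\alpha_0\bar\beta_0}\,dz^{\alpha_0}\otimes d\bar z^{\beta_0}$ is a genuine tensor, invariant under the coordinate change, so under $z^\alpha\mapsto\lambda z^\alpha$ the component $g_{\alpha_0\bar\beta_0}$ scales by $\bar\lambda^{\delta_{\alpha\alpha_0}}\lambda^{\delta_{\bar\alpha\bar\beta_0}}$ — wait, one should instead phrase it via the transformation of $\partial/\partial z^{\alpha_0}$ and $\partial/\partial\bar z^{\beta_0}$, each of which scales by $\lambda^{-\delta_{\alpha\alpha_0}}$ and $\lambda^{\delta_{\bar\alpha\bar\beta_0}}$ respectively. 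Doing this uniformly for $g$, $h$, $\omega$, $\bar\omega$, and $\Xi$ and summing exponents, the monomial $A$ scales by $\lambda^{\operatorname{deg}_{\bar\alpha}(A)-\operatorname{deg}_\alpha(A)}$.

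Now I would conclude as follows. Since the circle action permutes the monomials among themselves — indeed it multiplies each $A$ by a scalar, so it fixes each monomial up to that scalar — and since $P$ is invariant, we get $\sum_A c(A,P)\,\lambda^{\operatorname{deg}_{\bar\alpha}(A)-\operatorname{deg}_\alpha(A)}A = \sum_A c(A,P)A$ identically in $\lambda$ on the unit circle. Grouping monomials $A$ by the value of $e(A):=\operatorname{deg}_{\bar\alpha}(A)-\operatorname{deg}_\alpha(A)$ and using that distinct monomials are linearly independent in $\mathfrak{A}_\mm$, each graded piece must separately be invariant, forcing $\lambda^{e(A)}c(A,P)=c(A,P)$ for all $\lambda$ on the circle; hence $c(A,P)=0$ unless $e(A)=0$. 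Thus every monomial $A$ with $c(A,P)\ne0$ satisfies $\operatorname{deg}_\alpha(A)=\operatorname{deg}_{\bar\alpha}(A)$, which is the claim. The main obstacle is purely technical: getting the transformation law of each formal variable exactly right, in particular being careful that $\omega_{\alpha_0}$ carries a lower index $\alpha_0$ that contributes to $\operatorname{deg}_\alpha$ while the derivative indices contribute via the $\partial/\partial z$ factors, and checking that the relations (\ref{E3.b}), (\ref{E3.d}), (\ref{E3.f}) and those of Lemma~\ref{L3.1} are compatible with this grading (they are, since they only identify variables with the same multiset of holomorphic and anti-holomorphic indices).
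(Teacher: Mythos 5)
Your proposal is correct and is essentially the same argument as the paper's: both use invariance under the diagonal circle subgroup of $U(\mm)$ scaling a single holomorphic coordinate, compute that a monomial $A$ picks up a phase depending on $\operatorname{deg}_\alpha(A)-\operatorname{deg}_{\bar\alpha}(A)$, and conclude by linear independence of monomials. The only cosmetic difference is that the paper first permutes indices to reduce to $\alpha=1$ and writes the phase as $e^{\sqrt{-1}\theta}$ rather than $\lambda$, but the substance is identical.
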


\begin{proof} Since $P$ is invariant, we can permute the indices. Thus we may suppose that $\alpha=1$.
Make a unitary change of coordinates to define a new holomorphic coordinate
system $\vec w$ so that
$$
\frac{\partial}{\partial w^\alpha}=\left\{\begin{array}{cl}
e^{\sqrt{-1}\theta}\frac{\partial}{\partial z^1}&\text{if }\alpha=1\\[0.05in]
\frac{\partial}{\partial z^\alpha}&\text{if }\alpha>1\end{array}\right\}\text{ and }
\frac{\partial}{\bar\partial w^{\bar\beta}}=\left\{\begin{array}{cl}
e^{-\sqrt{-1}\theta}\frac{\partial}{\partial\bar z^{\bar 1}}&\text{if }\beta=1\\[0.05in]
\frac{\partial}{\partial\bar z^{\bar\beta}}&\text{if }\beta>1\end{array}\right\}\,.
$$
To compute $P^w$, we formally replace the index
$1$ by $e^{\sqrt{-1}\theta}\cdot 1$ and $\bar 1$ by $e^{-\sqrt{-1}\theta}\cdot\bar 1$, and we leave the remaining
indices unchanged to expand each monomial of $P$ multi-linearly.
Thus $A^w=e^{\sqrt{-1}(\operatorname{deg}_1A-\operatorname{deg}_{\bar 1}A)\theta}A$ so
$$
P=\sum_Ac(A,P)A\text{ and }
P^w=\sum_Ae^{\sqrt{-1}(\operatorname{deg}_1(A)-\operatorname{deg}_{\bar 1}(A))\theta}\ 
c(A,P)A\,.
$$
Since $P$ is invariant, $P^w=P$ and thus $\operatorname{deg}_1(A)=\operatorname{deg}_{\bar 1}(A)$
if $c(A,P)\ne0$. 
\end{proof}

\begin{definition}\label{D6.2}\rm
Let $|\xi|^2+|\eta|^2=1$. Make a unitary change of coordinates to define a new holomorphic coordinate
system $\vec w$ so that
$$
\frac{\partial}{\partial w^\alpha}=\left\{\begin{array}{cl}
\xi\frac{\partial}{\partial z^1}+\eta\frac{\partial}{\partial z^2}&\text{if }\alpha=1\\[0.05in]
-\bar\eta\frac{\partial}{\partial z^1}+\bar\xi\frac{\partial}{\partial z^2}&\text{if }\alpha=2\\[0.05in]
\frac{\partial}{\partial z^\alpha}&\text{if }\alpha>2\end{array}\right\}\,,
$$
$$
\frac{\partial}{\bar\partial w^{\bar\beta}}=\left\{\begin{array}{cl}
\bar\xi\frac{\partial}{\partial\bar z^{\bar 1}}
+\bar\eta\frac{\partial}{\partial\bar z^{\bar2}}&\text{if }\beta=1\\[0.05in]
-\eta\frac{\partial}{\partial\bar z^{\bar 1}}
+\xi\frac{\partial}{\partial\bar z^{\bar2}}&\text{if }\beta=2\\[0.05in]
\frac{\partial}{\partial\bar z^{\bar\beta}}&\text{if }\beta>2\end{array}\right\}\,.
$$
If $P$ is a polynomial, let $P^w$ be the expression of $P$ in this new coordinate system.
We formally replace each index
$$
1\rightarrow\xi\cdot 1+\eta\cdot 2,\quad
2\rightarrow-\bar\eta\cdot1+\bar\xi\cdot2,\quad
\bar1\rightarrow\bar\xi\cdot\bar 1+\bar\eta\cdot\bar 2,\quad
\bar 2\rightarrow-\eta\bar 1+\xi\cdot\bar 2
$$
and leave the remaining indices unchanged. We then expand multilinearly to compute $P^w$. 
Of course, the use of the indices `1' and `2' is intended to be illustrative
only, any pair of distinct indices would suffice.
\end{definition}

\begin{definition}\label{D6.3}\rm
If $B$ is a monomial,  
let $\mathcal{B}(B)$ be the set of all monomials $A$ so that changing
a single index $1\rightarrow2$ or $\bar2\rightarrow\bar1$ in $A$ yields $B$; alternatively, so that
$A$ arises by changing a single index $2\rightarrow1$ or $\bar1\rightarrow\bar2$ in $B$.
Let $P^w$ be the expression of a polynomial $P$ in the new coordinate system
given  in Definition~\ref{D6.2} by taking $\xi=\cos(\phi)$ and  $\eta=\sin(\phi)e^{\sqrt{-1}\theta}$.
\begin{equation}\begin{array}{l}\label{E6.a}
P^w=c_0(B,P)\cos(\phi)^{u-1}\sin(\phi)e^{\sqrt{-1}\theta}B+\text{other terms where}\\[0.05in]
u:=\operatorname{deg}_1(B)+\operatorname{deg}_2(B)+
\operatorname{deg}_{\bar 1}(B)+\operatorname{deg}_{\bar 2}(B)\text{ and}\\[0.05in]
c_0(B,P)=\sum_{A\in\mathcal{B}(B)}\nu(A)c(A,P)\text{ where }\nu(A)\ne0\,.
\end{array}\end{equation}\end{definition}

\begin{example}\rm  If $B=g_{(12;\bar 1\bar 2)}g_{(12;\bar 1\bar 1)}$, then $\mathcal{B}(B)=\{A_1,A_2,A_3,A_4\}$
where we have marked with $\star$ the index $1\rightarrow 2$ or $\bar 2\rightarrow\bar1$
that was changed in $A$ to create $B$; we apply the symmetries of Equation~(\ref{E3.b}) to obtain:
\medbreak\quad$A_1=g_{(1^\star1;\bar 1\bar 2)}g_{(12;\bar 1\bar 1)}\ \Rightarrow \ 
B=g_{(2^\star1;\bar 1\bar 2)}g_{(12;\bar 1\bar 1)}\text{ by }1^\star\rightarrow2^\star$,
\smallbreak\quad$A_1=g_{(11^\star;\bar 1\bar 2)}g_{(12;\bar 1\bar 1)}\ \Rightarrow \ 
B=g_{(12^\star;\bar 1\bar 2)}g_{(12;\bar 1\bar 1)}\text{ by }1^\star\rightarrow2^\star,
\ \nu(A_1)=2$,
\smallbreak\quad$A_2=g_{(12;\bar 1\bar 2)}g_{(1^\star1;\bar 1\bar 1)}\ \Rightarrow\ 
B=g_{(12;\bar 1\bar 2)}g_{(2^\star1;\bar 1\bar 1)}\text{ by }1^\star\rightarrow2^\star$,
\smallbreak\quad$A_2=g_{(12;\bar 1\bar 2)}g_{(11^\star;\bar 1\bar 1)}\ \Rightarrow\ 
B=g_{(12;\bar 1\bar 2)}g_{(12^\star;\bar 1\bar 1)}\text{ by }1^\star\rightarrow2^\star,\ \nu(A_2)=2$,
\smallbreak\quad$A_3=g_{(12;{\bar 2^\star}\bar 2)}g_{(12;\bar1\bar1)}\ \Rightarrow\  
B=g_{(12;\bar 1^\star\bar 2)}g_{(12;\bar 1\bar 1)}\text{ by }\bar2^\star\rightarrow\bar1^\star$,
\smallbreak\quad$A_3=g_{(12;\bar 2\bar 2^\star)}g_{(12;\bar1\bar1)}\ \Rightarrow\  
B=g_{(12;\bar 2\bar 1^\star)}g_{(12;\bar 1\bar 1)}\text{ by }\bar2^\star\rightarrow\bar1^\star,
\ \nu(A_3)=-2$,
\smallbreak\quad$A_4=g_{(12;\bar 1\bar2)}g_{(12;\bar 1\bar 2^\star)}\ \Rightarrow\ 
B=g_{(12;\bar 1\bar 2)}g_{(12;\bar 1\bar 1^\star)}\text{ by }\bar2^\star\rightarrow\bar1^\star,
\ \nu(A_4)=-1$,
\smallbreak\quad$u=8\text{ and }c_0(B,P)=2c(A_1,P)+2c(A_2,P)-2c(A_3,P)-c(A_4,P)$.\end{example}

\begin{lemma}\label{L6.5} Let $0\ne P\in\mathcal{P}_{\mm,2n}$.
If $B$ is any monomial, then either no monomial of $\mathcal{B}(B)$ appears in $P$
or at least two monomials of $\mathcal{B}(B)$ appear in $P$.\end{lemma}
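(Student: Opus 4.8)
The plan is to extract from the unitary invariance of $P$ a single linear relation among the coefficients $\{c(A,P):A\in\mathcal{B}(B)\}$, and then observe that such a relation, all of whose coefficients are nonzero, cannot be satisfied when exactly one of the unknowns $c(A,P)$ is nonzero. Since $P\in\mathcal{P}_{\mm,2n}$ is invariant, re-expressing it in the holomorphic coordinate system $\vec w$ of Definition~\ref{D6.2} gives $P^w=P$. Written in the original tensorial variables, the right-hand side is a polynomial whose coefficients are honest constants and do not involve the parameters $\phi,\theta$ defining $\vec w$; comparing this with the expansion~(\ref{E6.a}) of $P^w$, in which the coefficient of $B$ carries the summand $c_0(B,P)\cos(\phi)^{u-1}\sin(\phi)e^{\sqrt{-1}\theta}$ and no other term of that exact form, forces $c_0(B,P)=\sum_{A\in\mathcal{B}(B)}\nu(A)\,c(A,P)=0$. (Rigorously, differentiate $P^w=P$ in $\phi$ at $\phi=0$, where $\xi=1$ and $\eta=0$: the product rule applied to the multilinear substitution $1\mapsto\xi\cdot1+\eta\cdot2$, and so on, changes exactly one index of each monomial of $P$, and the terms producing $B$ with angular frequency $e^{+\sqrt{-1}\theta}$ are precisely those arising from a change $1\to2$ or $\bar2\to\bar1$ in some $A\in\mathcal{B}(B)$, whence the coefficient of $e^{+\sqrt{-1}\theta}B$ on the left is $c_0(B,P)$ and on the right is $0$.)

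Granting the relation $c_0(B,P)=0$, the conclusion is immediate. If exactly one $A_0\in\mathcal{B}(B)$ appeared in $P$, then $c_0(B,P)=\nu(A_0)\,c(A_0,P)\neq0$, since $\nu(A_0)\neq0$ and $c(A_0,P)\neq0$ --- a contradiction. Hence either no monomial of $\mathcal{B}(B)$ appears in $P$, or at least two monomials of $\mathcal{B}(B)$ appear in $P$.

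The step I expect to be the main obstacle is the bookkeeping underlying $\nu(A)\neq0$: here $\nu(A)$ is the signed count (weight $+1$ for each admissible change $1\to2$ and $-1$ for each admissible change $\bar2\to\bar1$) of those index positions of $A$ whose alteration yields $B$, computed after the symmetry relations~(\ref{E3.b}),~(\ref{E3.d}),~(\ref{E3.f}) of $\mathfrak{A}_\mm$ have been imposed. One must check that the multilinear expansion is compatible with these relations and, crucially, that for a fixed pair $(A,B)$ the admissible positions cannot be of both types at once --- a change of the first type alters a holomorphic slot while a change of the second alters an anti-holomorphic slot, and both producing the same $B$ would force $A=B$ --- so that $\nu(A)$ is a nonzero integer with a well-defined sign. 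This is exactly what the Example following Definition~\ref{D6.3} illustrates, and with it in hand the lemma follows.
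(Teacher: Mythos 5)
Your proof is correct and follows the paper's argument: unitary invariance forces $c_0(B,P)=\sum_{A\in\mathcal{B}(B)}\nu(A)\,c(A,P)=0$, and since each $\nu(A)\ne0$, exactly one nonzero $c(A,P)$ is impossible. The paper additionally dispenses first with the case that $B$ itself is a monomial of $P$ (via Lemma~\ref{L6.1} and the identity $\operatorname{deg}_1(B)-\operatorname{deg}_{\bar 1}(B)=\operatorname{deg}_1(A)-\operatorname{deg}_{\bar 1}(A)-1$), but as your derivation shows that case split is not needed since $c_0(B,P)=0$ holds regardless; you also supply the justification that $\nu(A)\ne0$, which the paper asserts without proof.
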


\begin{proof} If $A\in\operatorname{\mathcal{B}}(B)$, then 
$\operatorname{deg}_1(B)-\operatorname{deg}_{\bar 1}(B)
=\operatorname{deg}_1(A)-\operatorname{deg}_{\bar 1}(A)-1$.
Thus if $B$ is a monomial of $P$, no monomial of $\mathcal{B}(B)$ is a monomial of $P$ by
Lemma~\ref{L6.1}
and Lemma~\ref{L6.5} follows.
We therefore assume $B$ is not a monomial of $P$ and thus
$c_0(B,P)=0$. We use Equation~(\ref{E6.a}). Since the multiplicities $\nu(A)$ are non-zero integers, if 
$c(A_1,P)\ne0$ for some $A_1$, there must exist at least another monomial $A_2\in\mathcal{B}(B)$ to cancel
off $\nu(A_1)c(A_1,P)$ in Equation~(\ref{E6.a}).
\end{proof}

We use Lemma~\ref{L6.5} as to prove the following result.

\begin{lemma}\label{L6.6}
Let $0\ne P\in\ker(r)\cap\mathcal{P}_{\mm,2n}$ where $n\le\mm$.
There exists a monomial $A$ of $P$ of the form
given in Equation~(\ref{E4.a}) so that $U_\nu=(\nu,\dots,\nu)$ for $\nu\le n$.
Furthermore, $\ell(A)\ge\mm$, and if $\ell(A)=\mm$, then none of the $U_\nu$ is empty.
\end{lemma}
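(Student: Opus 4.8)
The plan is to exploit the reformulation of the kernel condition from Lemma~\ref{L5.2}, the index balance of Lemma~\ref{L6.1}, and the cancellation mechanism of Lemma~\ref{L6.5} driven by the rotations of Definitions~\ref{D6.2} and~\ref{D6.3}. First I would record two elementary consequences of the hypotheses. Since $P\in\ker(r)$, Lemma~\ref{L5.2} gives $\operatorname{deg}_\alpha(A)\ge1$ for every monomial $A$ of $P$ and every $1\le\alpha\le\mm$, and Lemma~\ref{L6.1} upgrades this to $\operatorname{deg}_\alpha(A)=\operatorname{deg}_{\bar\alpha}(A)\ge1$; in particular every holomorphic index $1,\dots,\mm$ occurs in $A$. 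On the other hand, writing $A$ in the normal form~(\ref{E4.a}) and summing the weights of its factors (each factor other than $\Xi$ has weight at least $2$, and $\Xi$ has weight $e+f\ge0$) one obtains $2n=\operatorname{weight}(A)\ge 2(a+b+c+d)+(e+f)$, hence the a priori bound $\ell(A)\le n\le\mm$ for every monomial $A$ of $P$.

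The heart of the argument is to produce a monomial of $P$ whose holomorphic index blocks are ``pure''. Using invariance to permute the indices, I would show by induction on $\nu=1,\dots,n$ that $P$ has a monomial with $U_1=(1,\dots,1),\dots,U_\nu=(\nu,\dots,\nu)$. Suppose $A$ is a monomial of $P$ with $U_1,\dots,U_{\nu-1}$ already of this form but with an occurrence of the index $\nu$ sharing a block with some index $\gamma\ne\nu$; necessarily $\gamma>\nu$, since the smaller indices have been isolated. One forms a companion monomial $B$ from $A$ by a single allowed index change (of type $\nu\to\gamma$ or $\bar\gamma\to\bar\nu$, chosen so that $A\in\mathcal{B}(B)$); by the index balance above $B$ is not a monomial of $P$, so Lemma~\ref{L6.5} forces a second monomial $A'\ne A$ of $P$ in $\mathcal{B}(B)$. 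Using the symmetry relations~(\ref{E3.b}),~(\ref{E3.d}),~(\ref{E3.f}) one then checks that, after possibly iterating this replacement a bounded number of times, one reaches a monomial of $P$ in which the index $\nu$ has been collected into a single block and the blocks $U_1,\dots,U_{\nu-1}$ are left untouched, because only the indices $\nu$ and $\gamma>\nu$ are ever moved; the iteration is controlled by a monovariant measuring how concentrated the indices $1,\dots,\nu$ are among the blocks. After $n$ stages this produces the desired monomial $A$. I expect this inductive construction to be the main obstacle: identifying precisely which monomials survive in $\mathcal{B}(B)$ once the symmetry relations are imposed, and exhibiting a monovariant that provably improves at each step while leaving the already normalized blocks intact, is the delicate point.

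To finish, let $A$ be the monomial produced above. By the a priori bound $\ell(A)\le n$, the monomial $A$ has at most $n$ holomorphic blocks, namely $U_1,\dots,U_{\ell(A)}$, each pure in a distinct index, with $U_\nu=(\nu,\dots,\nu)$; hence the holomorphic indices occurring in $A$ all lie in $\{1,\dots,\ell(A)\}$. Since every index $1,\dots,\mm$ must occur holomorphically, $\ell(A)\ge\mm$, and combined with $\ell(A)\le n\le\mm$ this forces $n=\mm$ and $\ell(A)=\mm$. Thus $A$ has exactly $\mm$ holomorphic blocks $U_1,\dots,U_\mm$, and since the index $\nu$ occurs holomorphically it can, by purity, occur only in $U_\nu$; therefore no $U_\nu$ is empty, which is the final assertion.
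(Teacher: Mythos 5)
Your overall plan matches the paper's: invoke Lemmas~\ref{L5.2} and~\ref{L6.1} to get $\operatorname{deg}_\alpha(A)=\operatorname{deg}_{\bar\alpha}(A)\ge1$ for every $\alpha\le\mm$, use the rotation/cancellation mechanism of Lemma~\ref{L6.5} to normalize the blocks $U_\nu$, and close with a counting argument. Your weight estimate $\ell(A)\le n$ is correct and is essentially what the paper redoes inside Lemma~\ref{L7.1}. However, there is a genuine gap in the crucial inductive step, and you acknowledge it yourself. Two specific problems: first, your assertion ``necessarily $\gamma>\nu$, since the smaller indices have been isolated'' does not follow from the inductive hypothesis that $U_1,\dots,U_{\nu-1}$ are pure; purity of $U_\mu$ for $\mu<\nu$ says nothing about whether $\mu$ also appears in $U_\nu$ or in a later block, so $\gamma$ may well be $<\nu$. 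Second, the iterative ``monovariant'' scheme -- apply Lemma~\ref{L6.5}, move to a new monomial, repeat -- is left unspecified, and it is precisely here that the argument needs to be pinned down.

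The paper avoids both issues by replacing iteration with a one-shot maximality argument. At stage $\nu$, among all monomials of $P$ whose first $\nu-1$ blocks are already $(1,\dots,1),\dots,(\nu-1,\dots,\nu-1)$, choose $A$ with $\operatorname{deg}_\nu(U_\nu)$ maximal. If $U_\nu$ contains some $\alpha\ne\nu$ (no hypothesis that $\alpha>\nu$ is needed), form $B$ by replacing a single $\alpha$ in $U_\nu$ by $\nu$; this change does not touch $U_1,\dots,U_{\nu-1}$. By Lemma~\ref{L6.1}, $B$ is not a monomial of $P$, so Lemma~\ref{L6.5} (with the index pair $(1,2)$ replaced by $(\alpha,\nu)$) produces a second $A_1\ne A$ of $P$ in $\mathcal{B}(B)$. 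The change taking $B$ to $A_1$ cannot occur inside $U_\nu$, because by the symmetry relations~(\ref{E3.b}),~(\ref{E3.d}),~(\ref{E3.f}) that would force $A_1=A$; hence $\tilde U_\nu(A_1)=U_\nu(B)$ carries one more occurrence of $\nu$ than $U_\nu(A)$, and $U_1,\dots,U_{\nu-1}$ are unchanged, contradicting maximality. This single contradiction replaces your iteration and makes the ``monovariant'' unnecessary. The remainder of your argument ($\ell(A)\ge\mm$ because $\operatorname{deg}_\nu(A)\ne0$ for all $\nu\le\mm$, and nonemptiness of the $U_\nu$ when $\ell(A)=\mm$) is sound and agrees with the paper.
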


\begin{proof} We shall apply Lemma~\ref{L6.1} and Lemma~\ref{L6.5}.
\smallbreak\noindent{\bf Step 1:} We concentrate on the collections $(U_\nu;\bar V_\nu)$ and
suppress the particular variables $g$, $h$, $\omega$, $\bar\omega$ or $\Xi$ in which they
appear. Choose a monomial $A=(U_1;\bar V_1)A_0$ of $P$ 
such that $\operatorname{deg}_1(U_1)$ is maximal. If $U_1=(1\dots1)$, we proceed to Step 2.  Let
$U_1=(1\dots 1\alpha^\star\dots)$ for $\alpha\ne1$. By permuting the indices we may assume $\alpha=2$. Set 
$B=(1\dots 11^\star\dots;\bar V_1)A_0$.
Use Lemma~\ref{L6.5} to choose a monomial $A_1\in\mathcal{B}(B)$ of $P$ different from $A$. Since $A_1\ne A$,
$A_1$ does not transform to $B$ by changing an index of $U_1$ and thus 
$A_1$ has an index collection $\tilde U_1$ with one more occurrence of the index `1' which contradicts the 
maximality of $A$. This contradiction shows $U_1=(1\dots1)$.

\smallbreak\noindent{\bf Step 2:} Choose $A=(1\dots 1;\bar V_1)(U_2;\bar V_2)A_0$ so the number
of occurrences of the index $2$ in $U_2$ is maximal. If $U_2=(2\dots2)$ proceed to Step 3.
Otherwise assume $U_2=(2\dots2\alpha^\star\dots)$ for $\alpha\ne2$. 
Let $B=(1\dots 1;\bar V_1)(2\dots 22^\star\dots;\bar V_2)A_0$
be obtained by changing the index $\alpha$
 to the index $2$.  By Lemma~\ref{L6.5} (where we replace the indices
$(1,2)$ by $(2,\alpha)$),
we can choose $A_1\in\mathcal{B}(B)$ to be a monomial of $P$ different from $A$. 
Changing the index $\alpha\rightarrow 2$ does not
affect $U_1=(1\dots1)$. Since $A_1\ne A$, it has an index collection $\tilde U_2=(2\dots 22\dots)$ which contradicts the maximality of $A$.
Thus we can choose $A=(1\dots 1;\bar V_1)(2\dots 2;\bar V_2)A_0$.

\smallbreak\noindent{\bf Step 3:} We continue in this fashion to construct $A$ with the desired form.
The process stops when $\nu=\mm$ or when $\nu=\ell(A)$. Since $\operatorname{deg}_\nu(A)\ne0$ for $1\le\nu\le\mm$
by Lemma~\ref{L5.2}, we have $\ell(A)\ge\mm$. 
If $\ell(A)=\mm$, none of the $U_\nu$ could be empty or the index $\nu$ would not appear in $A$.\end{proof}

\section{The proof of Theorem~\ref{T1.4}}\label{S7}
The subalgebra
of variables of weight 2 will play a distinguished role and we set
$$
\mathfrak{B}_\mm=\mathbb{C}[g_{(\alpha_0\alpha_1;\bar\beta_0\bar\beta_1)},h_{(p\bar q;\alpha_1;\bar\beta_1)},
\omega_{(\alpha_0;\bar\beta_1)},\bar\omega_{(\alpha_1;\bar\beta_0)}]\,.$$ By Lemma~\ref{L5.3},
$a_{m,2n}(\mathcal{E}_{\operatorname{Dol}})\in\ker(r)$.
Consequently, the fact that $a_{m,2n}(\mathcal{E}_{\operatorname{Dol}})=0$ for $2n<m$
will follow from the following result.
\begin{lemma}\label{L7.1} Let $2n\le m$.
\begin{enumerate}
\item $\ker(r)\cap\mathcal{P}_{\mm,2n}=\{0\}$ if $2n<m$.
\item $\ker(r)\cap\mathcal{P}_{\mm,m}\subset\mathfrak{B}_m\oplus\bigoplus_{\alpha_1\bar\beta_1}\omega_{\alpha_1}\bar\omega_{\bar\beta_1}\mathfrak{B}_m$.
\end{enumerate}\end{lemma}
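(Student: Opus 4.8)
The strategy is to use the structure result of Lemma~\ref{L6.6} to pin down the possible monomials of an element $0\ne P\in\ker(r)\cap\mathcal{P}_{\mm,2n}$ and then argue by a weight count. Given such a $P$, pick the monomial $A$ produced by Lemma~\ref{L6.6}, written as in Equation~(\ref{E4.a}) with $U_\nu=(\nu,\dots,\nu)$ for $\nu\le n$ and $\ell(A)\ge\mm$. The point is that $A$ is built from $\ell(A)$ variables, each of weight at least $1$, and the variables contributing to the $(U_\nu;\bar V_\nu)$ with $U_\nu$ nonempty have weight at least $1$, with weight exactly $1$ only for the $\Xi$-factor (the $\omega_{\alpha}$ and $\bar\omega_{\bar\beta}$) and weight exactly $2$ for the four families generating $\mathfrak{B}_\mm$. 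Since $\operatorname{weight}(A)=2n\le 2\mm$, one compares the number of factors with the total weight.

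**Case $2n<m$, i.e.\ $n\le\mm-1$... actually $2n\le m-2$, so $n\le\mm-1$.** Here Lemma~\ref{L6.6} forces $\ell(A)\ge\mm$, so $A$ has at least $\mm$ factors, each of weight $\ge1$; hence $\operatorname{weight}(A)\ge\mm>n$... that is not yet a contradiction since $2n$ could be as large as $2\mm-2$. The sharper input is that by Lemma~\ref{L6.1}, $\operatorname{deg}_\alpha(A)=\operatorname{deg}_{\bar\alpha}(A)$ for every $\alpha$, and by Lemma~\ref{L5.2} every index $1\le\alpha\le\mm$ has $\operatorname{deg}_\alpha(A)\ge1$; so $\operatorname{deg}_{\bar\alpha}(A)\ge1$ for all $\alpha$ too, and summing, the total number of holomorphic index slots is $\ge\mm$ and likewise antiholomorphic, giving at least $2\mm$ index occurrences. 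Each weight-$w$ variable accounts for at most... this needs care: a weight-$2$ generator of $\mathfrak{B}_\mm$ like $g_{(\alpha_0\alpha_1;\bar\beta_0\bar\beta_1)}$ carries $2$ holomorphic and $2$ antiholomorphic slots, i.e.\ $4$ slots for $2$ units of weight — slot-to-weight ratio $2$; a weight-$1$ variable $\omega_\alpha$ carries $1$ slot for $1$ weight — ratio $1$. In general the ratio is at most $2$ (a weight-$w$ derivative variable has $\le w+2$ holomorphic plus antiholomorphic slots... no: $g_{(U;\bar V)}$ of weight $j+k$ has $(j+1)+(k+1)=j+k+2$ slots). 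So slots $\le 2\cdot\operatorname{weight}$ only for the weight-$2$ case; higher derivative variables do worse. Thus $2\mm\le(\text{slots of }A)\le 2\cdot\operatorname{weight}(A)=4n$ gives only $n\ge\mm/2$, which is not enough. The correct refinement: every derivative variable of weight $w$ in $A$ produces at most $w+2$ slots but requires $w\ge2$; a $\Xi$-factor of weight $e+f$ produces exactly $e+f$ slots. Writing $A$ with $L$ derivative-variable factors of weights $w_1,\dots,w_L\ge2$ and a $\Xi$-factor of weight $e+f$, the slot count is $\sum(w_i+2)+(e+f)\le 2\mm$ forces $2L+\sum w_i+(e+f)\le 2\mm$, i.e.\ $2L+2n\le 2\mm+$ (correction if some $w_i$... no, $\sum(w_i+2)=2L+\sum w_i$ and $\sum w_i+e+f=2n$, so $2L+2n$... wait $2L+\sum w_i+e+f = 2L + 2n$)... hmm, so $2L+2n\le 2\mm$... no, I need slots $\geq 2\mm$: $2L+2n\ge$ slots $\ge 2\mm$, unhelpful direction. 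Let me instead bound slots from below by $2\mm$ AND from above by $2n + 2(\#\text{derivative factors})$... Actually the key is: $\ell(A)\le(\text{number of derivative factors})+1$, and derivative factors have weight $\ge 2$ so $2n=\operatorname{weight}(A)\ge 2(\ell(A)-1)+(\text{weight of }\Xi)$. Combined with $\ell(A)\ge\mm$ (Lemma~\ref{L6.6}), $2n\ge 2(\mm-1)=2\mm-2=m-2$, so $2n\ge m-2$; the hypothesis $2n<m$ gives $2n=m-2$, and then equality forces $\ell(A)=\mm$, all derivative factors of weight exactly $2$, and $\Xi=1$.

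**Finishing both parts together.** So in the boundary case $2n=m-2$ every monomial $A$ of $P$ (after the reduction) lies in $\mathfrak{B}_\mm$ with $\ell(A)=\mm$ and all $U_\nu$ nonempty (Lemma~\ref{L6.6}), but $\ell(A)=\mm$ weight-$2$ factors have weight $2\mm=m>m-2$, a contradiction; hence $P=0$, proving~(1). For~(2), when $2n=m$ the same count gives $2n=m\ge 2(\ell(A)-1)+\operatorname{weight}(\Xi)\ge 2(\mm-1)+\operatorname{weight}(\Xi)$, so $\operatorname{weight}(\Xi)\le 2$; and $\operatorname{deg}_\alpha=\operatorname{deg}_{\bar\alpha}$ forces the number of $\omega$'s in $\Xi$ to equal the number of $\bar\omega$'s, so either $\Xi=1$ (all factors weight $2$, $A\in\mathfrak{B}_\mm$) or $\Xi=\omega_{\alpha_1}\bar\omega_{\bar\beta_1}$ for some indices and all remaining $\ell(A)-1=\mm-1$ factors have weight $2$, i.e.\ $A\in\omega_{\alpha_1}\bar\omega_{\bar\beta_1}\mathfrak{B}_\mm$. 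This must be checked for \emph{every} monomial of $P$, not just the distinguished one, which is handled by running the Lemma~\ref{L6.6} reduction argument on the span of $P$ (or observing the weight/degree constraints above apply to all monomials directly, while Lemma~\ref{L6.6}'s output only guarantees existence of one such $A$ — so part~(2) actually needs the direct argument: any monomial $A$ of $P\in\mathcal{P}_{\mm,m}$ has $\operatorname{weight}(A)=m$ and, since $P\in\ker(r)$ implies via Lemma~\ref{L5.2} that $\operatorname{deg}_\alpha(A)\ge1$ for all $\alpha$, hence $\ell(A)\ge\mm$, hence by the weight bound all but at most one factor has weight $2$ and any odd-weight excess sits in a single weight-$2$-or-less piece — conclude as above).

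**Main obstacle.** The delicate point is getting part~(2) for \emph{all} monomials of $P$ rather than just the one distinguished monomial that Lemma~\ref{L6.6} hands us; one must verify that the inequality $\ell(A)\ge\mm$ holds for every monomial of any $P\in\ker(r)$ — this follows from Lemma~\ref{L5.2} (every index $\alpha$ has positive degree, and each factor can "cover" a bounded number of indices), not from Lemma~\ref{L6.6}, so the argument should be rephrased to extract $\ell(A)\ge\mm$ directly from Lemma~\ref{L5.2} and then run the pure weight count $m=\operatorname{weight}(A)\ge 2(\ell(A)-1)+\operatorname{weight}(\Xi)$ to force membership in $\mathfrak{B}_m\oplus\bigoplus_{\alpha_1\bar\beta_1}\omega_{\alpha_1}\bar\omega_{\bar\beta_1}\mathfrak{B}_m$.
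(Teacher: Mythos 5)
Your plan---extract a distinguished monomial via Lemma~\ref{L6.6} and then run a weight count---is the same as the paper's, and for part~(1) this skeleton suffices. But the details contain genuine errors, the most serious being the proposed repair of the ``all monomials'' issue. You claim that $\ell(A)\ge\mm$ holds for \emph{every} monomial $A$ of any $P\in\ker(r)$, ``directly from Lemma~\ref{L5.2}.'' This is false: Lemma~\ref{L5.2} only says $\operatorname{deg}_\alpha(A)\ge 1$ for every index $\alpha$, and a single variable of high weight can carry all $\mm$ holomorphic indices in one factor. For instance $g_{(12\cdots\mm;\bar1\bar2\cdots\bar\mm)}\,g_{(11;\bar1\bar1)}$ has weight $m$, $\operatorname{deg}_\alpha\ge1$ for every $\alpha$, and $\ell=2$. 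The bound $\ell(A)\ge\mm$ is an output of the maximality construction of Lemma~\ref{L6.6} and is only guaranteed for the distinguished monomial it produces, not for arbitrary monomials. The correct way to pass from one distinguished monomial to the whole polynomial is to split $P=P_1+P_2$ by the $U(\mm)$-equivariant projection onto $\mathfrak{B}_m\oplus\bigoplus\omega_{\alpha}\bar\omega_{\bar\beta}\mathfrak{B}_m$, observe that $r$ respects this splitting so that $P_2\in\ker(r)$, and then apply Lemma~\ref{L6.6} to $P_2$ to get a contradiction if $P_2\ne0$; the paper leaves this reduction implicit, but your proposed substitute does not work.

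Second, the single unified inequality $m\ge 2(\ell(A)-1)+\operatorname{weight}(\Xi)$ is too coarse to get part~(2). It only yields $\operatorname{weight}(\Xi)\le 2$ and does not exclude $\Xi=\omega_\alpha$ (weight $1$) paired with one derivative factor of weight~$3$. Your appeal to Lemma~\ref{L6.1} to force $e=f$ is not justified either: that lemma is a per-index statement, and the slot imbalance of a weight-$1$ $\Xi$ can be absorbed by an unbalanced weight-$2$ variable such as $\omega_{(\alpha_0\alpha_1;)}$. The paper instead runs a case analysis on $(e,f)$: when $e=0$ (or $f=0$), Lemma~\ref{L6.6} (or its antiholomorphic mirror) gives the strictly stronger bound $a+b+c+d\ge\mm$ rather than $a+b+c+d+1\ge\mm$, and it is this extra unit that kills the odd-weight $\Xi$. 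Finally, even with $\Xi\in\{1,\ \omega_\alpha\bar\omega_{\bar\beta}\}$ and all derivative factors of weight~$2$, you have not shown $A\in\mathfrak{B}_m\oplus\bigoplus\omega_\alpha\bar\omega_{\bar\beta}\mathfrak{B}_m$: the weight-$2$ variables $\omega_{(\alpha_0\alpha_1;)}$ and $\bar\omega_{(;\bar\beta_0\bar\beta_1)}$ are not generators of $\mathfrak{B}_m$. The paper excludes them using the ``none of the $U_\nu$ is empty when $\ell(A)=\mm$'' clause of Lemma~\ref{L6.6} together with its antiholomorphic analogue; this step is missing from your argument.
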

\begin{proof} Let $P\in\ker(r)\cap\mathcal{P}_{\mm,2n}$ where $2n\le m$. 
Choose a monomial $A$ of $P$ satisfying the conclusions of Lemma~\ref{L6.6}. We then have $\ell(A)\ge\mm$ 
and, by Lemma~\ref{L5.2}, 
$\operatorname{deg}_\alpha(A)\ne0$ for all $\alpha$. We examine $\Xi=\Xi_{(U;\bar V)}$
where $|U|=e$ and $|V|=f$; the role of the variables of weight 1 is crucial. 
\smallbreak\noindent{\bf Case 1. Suppose $e=0$.} Then $\deg_\alpha(A)=0$ for $\alpha>a+b+c+d$. Since $r(P)=0$,
$\operatorname{deg}_{\mm}(A)\ne0$. Thus $a+b+c+d\ge\mm$.  The normalizations of Lemma~\ref{L3.1} show
$$
m\ge2n=\operatorname{weight}(A)\ge 2a+2b+2e+2d+f\ge 2a+2b+2c+2d\ge2\mm=m\,.
$$
Thus equality holds. This implies $2n=m$, $f=0$, and $P$ is a polynomial in the variables of weight 2.
\smallbreak\noindent{\bf Case 2. Suppose $f=0$.} 
A similar argument
using the anti-holomorphic indices shows $2n=m$,  $e=0$, and $P$ is a polynomial in the variables of weight 2.
\smallbreak\noindent{\bf Case 3: Suppose $e>0$ and $f>0$.} If $a+b+c+d+1<\mm$, we could choose $A$
so that $\deg_\mm A=0$ which is false. Consequently, $a+b+c+d+1\ge\mm$. We estimate
\begin{eqnarray*}
m&\ge&2n=\operatorname{weight}(A)\ge 2a+2b+2c+2d+e+f\\
&\ge&2a+2b+2e+2d+2\ge 2\mm=m\,.
\end{eqnarray*}
Thus all the inequalities are in fact equalities. This implies $2n=m$, $e=f=1$, and the remaining variables comprising $A$ all have weight 2.
\medbreak If a $\bar\omega_*$ variable of weight 2 does not contain a holomorphic index, 
Lemma~\ref{L6.6} shows $\operatorname{deg}_\nu(A)=0$ for some holomorphic index which is
false since $r(A)=0$. Since Lemma~\ref{L6.6} also holds for the anti-holomorphic indices, 
if any of the $\omega_*$ variables of weight 2 does not contain an anti-holomorphic index,
then $\operatorname{deg}_{\bar\nu}(A)=0$ for some anti-holomorphic index which is false. Thus all the variables of weight 2 which divide $A$ belong to $\mathfrak{B}_m$ and Assertion~(2) holds.
\end{proof}

Let $\Re(\cdot)$ and $\Im(\cdot)$ be the real and imaginary parts of a $1$-form.

\begin{lemma}\label{L7.2}
If $\mathcal{M}$ is a Riemann surface, then
$a_{2,2}(\mathcal{M})
=\star\operatorname{Todd}_2(\mathcal{M})+\displaystyle\frac{d\Im\omega}\pi$.
\end{lemma}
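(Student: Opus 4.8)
The plan is to recognize $\bar\partial_{\bar\omega}$ as an \emph{undeformed} Dolbeault operator with coefficients in a larger holomorphic bundle and then invoke Patodi's local formula, Theorem~\ref{T1.3}~(2). Since $\partial\omega=0$ forces $\bar\partial\bar\omega=0$, the operator $\bar\partial+\operatorname{ext}(\bar\omega)$ is an integrable $\bar\partial$-operator on the trivial $C^\infty$ line bundle $L$ over $M$ and so endows $L$ with a holomorphic structure; equip $L$ with the Hermitian metric $h_L$ for which the standard $C^\infty$ section $e_0$ has unit norm. Then $\phi\mapsto\phi\otimes e_0$ is a grading-preserving isometry $\Lambda^{(0,i)}\otimes E\to\Lambda^{(0,i)}\otimes E\otimes L$ carrying $\sqrt2\,\bar\partial_{\bar\omega}$ to the coupled Dolbeault operator $\sqrt2\,\bar\partial_{E\otimes L}$, so $\mathcal{E}_{\operatorname{Dol}}(\mathcal{M})$ is isomorphic, as an elliptic complex together with its Hermitian inner products, to the ordinary Dolbeault complex $\mathcal{E}_{\operatorname{Dol}}(M,g,J,E\otimes L,h\otimes h_L)$. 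Consequently the two families of Laplace type operators coincide on the nose, and so do the pointwise heat trace invariants $a_{2,2}$.

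First I would compute the Chern character form of $(L,h_L)$. Its Chern connection is $h_L$-compatible with $(0,1)$ part $\bar\partial+\operatorname{ext}(\bar\omega)$; writing the connection $1$-form in the frame $e_0$ as $\theta$, compatibility gives $\theta+\bar\theta=0$ because $h_L(e_0,e_0)\equiv1$, while $\bar\partial_L e_0=\bar\omega\, e_0$ gives $\theta^{0,1}=\bar\omega$, hence $\theta^{1,0}=-\omega$ and $\theta=\bar\omega-\omega=-2\sqrt{-1}\,\Im(\omega)$. Therefore the curvature is $-2\sqrt{-1}\,d\Im(\omega)$, the first Chern form is $\frac{\sqrt{-1}}{2\pi}(-2\sqrt{-1})\,d\Im(\omega)=\frac1\pi\,d\Im(\omega)$, and $\operatorname{ch}(L,h_L)=\exp\!\bigl(\tfrac1\pi\,d\Im(\omega)\bigr)=\sum_k\tfrac1{k!\pi^k}\{d\Im(\omega)\}^k=\Theta$. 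Since $(M,g,J)$ is K\"ahler, Theorem~\ref{T1.3}~(2) applies to $E\otimes L$; combined with the multiplicativity $\operatorname{ch}(E\otimes L,h\otimes h_L)=\operatorname{ch}(E,h)\wedge\Theta$ it yields $a_{2,2}(\mathcal{M})=\star\{\operatorname{Td}(M,g,J)\wedge\operatorname{ch}(E,h)\wedge\Theta\}_{2}$. Extracting the degree-$2$ part (using $\operatorname{Td}_0=\Theta_0=1$ and $\Theta_2=\frac1\pi d\Im\omega$) gives $\star\operatorname{Todd}_2(\mathcal{M})+\frac{d\Im(\omega)}{\pi}$, as claimed.

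The step that deserves care is the identification in the first paragraph: one must check that $\phi\mapsto\phi\otimes e_0$ is genuinely an isometry for $h\otimes h_L$ and intertwines the complexes along with their formal adjoints and the normalizing factor $\sqrt2$, so that the comparison of heat kernels is exact and not merely asymptotic; one should also track the sign and $2\pi$ conventions so that $\operatorname{ch}(L,h_L)$ comes out equal to $\Theta$ rather than to a rescaling of it. After that everything is the bookkeeping of degrees in $\operatorname{Td}\wedge\operatorname{ch}\wedge\Theta$. The same reinterpretation applies verbatim in any complex dimension and gives Theorem~\ref{T1.4} directly; a route that instead stays inside the invariance-theory framework developed here is to use Lemma~\ref{L7.1}~(2) to write $a_{2,2}(\mathcal{M})=\star\operatorname{Todd}_2(\mathcal{M})+c_1\,\omega_{(1;\bar1)}+c_2\,\bar\omega_{(1;\bar1)}+c_3\,\omega_1\bar\omega_{\bar1}$ with universal constants, deduce $c_2=\overline{c_1}$ and $c_3\in\mathbb{R}$ from the reality of the heat density, deduce $c_3=0$ from the $\omega$-independence of $\operatorname{index}(\mathcal{E}_{\operatorname{Dol}}(\mathcal{M}))$ together with Stokes' theorem and $\int_M|\omega_1|^2\operatorname{dvol}>0$, and then pin down $c_1=c_2=\tfrac1\pi$ by one explicit heat kernel computation on a flat torus.
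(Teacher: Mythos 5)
Your argument is correct, and it is a genuinely different and considerably more economical route than the paper's. The paper's proof of Lemma~\ref{L7.2} simply quotes the Riemann-surface formula $a_{2,2}(\mathcal{M})=\{\tau-4\delta\Re(\omega)\}/(4\pi)$ established in \cite{AG20} and then checks directly in coordinates that $-\delta\Re(\omega)=\star d\Im(\omega)$, absorbing $\tau/(4\pi)$ into $\star\operatorname{Todd}_2$. Your first approach instead regards $\bar\partial_{\bar\omega}$ as the Dolbeault operator coupled to $E\otimes L$, where $L$ is the smoothly trivial line bundle carrying the holomorphic structure $\bar\partial_L=\bar\partial+\operatorname{ext}(\bar\omega)$ (integrable since $\bar\partial\bar\omega=0$) and the flat metric $h_L$. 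The map $\phi\mapsto\phi\otimes e_0$ is a fiberwise unitary intertwining the two complexes together with their formal adjoints, since $h_L(e_0,e_0)\equiv1$ and the volume form is unchanged, so the deformed and coupled Laplacians, their heat kernels, and hence the pointwise invariants $a_{m,2n}$ agree on the nose. Your Chern--Weil bookkeeping is also right: the Chern connection form in the frame $e_0$ is $\theta=\bar\omega-\omega$, so $F_L=d\theta=-2\sqrt{-1}\,d\Im(\omega)$ and $\operatorname{ch}(L,h_L)=\exp\bigl(\tfrac{\sqrt{-1}}{2\pi}F_L\bigr)=\exp\bigl(\tfrac1\pi d\Im\omega\bigr)=\Theta$, and multiplicativity of $\operatorname{ch}$ under tensor product (the Chern connection of $h\otimes h_L$ is the tensor-product connection) gives the rest. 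Applying Theorem~\ref{T1.3}~(2) to $E\otimes L$ then yields the lemma, and, as you observe, the identical reduction proves Theorem~\ref{T1.4} in every complex dimension at once, bypassing the invariance-theory machinery of Sections~\ref{S3}--\ref{S7} entirely. What the paper's longer route buys is the structural description of $\ker r$ (Lemmas~\ref{L7.1} and~\ref{L7.3} and the closing lemma of Section~\ref{S7}), which has independent interest and parallels the de~Rham analysis of \cite{AG20}, where the analogous twist is by a \emph{flat} line bundle and the index density is unchanged; your reduction obtains the index density with far less work. Your alternative, purely invariance-theoretic route is also sound, but it still needs one explicit flat-torus heat kernel computation to fix the constant $1/\pi$, which is precisely what \cite{AG20} supplies and what the paper cites.
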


\begin{proof}
\'Alvarez L\'opez and Gilkey \cite{AG20} showed that 
$a_{2,2}(\mathcal{M})=\frac\tau{8\pi}-\frac{1}{\pi}\delta(\Re(\omega))$. 
We have $\frac\tau{8\pi}=\star\operatorname{Todd}_2(\mathcal{M})$ by Hirzebruch~\cite{H66}.
Let 
\smallbreak\qquad\quad
$\omega=(u+\sqrt{-1}v)(dx+\sqrt{-1}dy)=(udx-vdy)+\sqrt{-1}(udy+vdx)$;
\smallbreak\qquad\quad
$-\delta(\Re(\omega))=-\delta(udx-vdy)=u_x-v_y$, and
\smallbreak\qquad\quad $d(\Im(\omega))=d(udy+vdx)=(u_x-v_y)dx\wedge dy$.
\end{proof}

We must improve Lemma~\ref{L6.6}.

\begin{lemma}\label{L7.3} Let $0\ne P\in\ker(r)\cap\mathcal{P}_{\mm,m}$. 
There exists a monomial $A$ of $P$ so
$$\begin{array}{l}
A=g_{(11;\bar\beta_1\bar\beta_2)}\dots g_{(aa;\bar\beta_{2a-1}\bar\beta_{2a})}
h_{(p_{a+1}\bar q_{a+1};a+1;\overline{a+1})}\dots h_{(p_b\bar q_b;a+b;\overline{a+b})}\\[0.08in]
\qquad\omega_{(a+b+1;\overline{a+b+1})}\dots\omega_{(a+b+c;\overline{a+b+c})}
\bar\omega_{(a+b+c+1;\overline{a+b+c+1})}\\[0.08in]
\qquad\dots\bar\omega_{(a+b+c+d;\overline{a+b+c+d})}\Xi\\[0.08in]
\text{ where }\Xi=1\text{ or }\Xi=\Xi_{(\mm;\bar\mm)}\text{ and }\beta_\nu\le a\text{ for }1\le\nu\le 2a\,.
\end{array}$$
\end{lemma}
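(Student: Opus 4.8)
The plan is to start from the monomial $A$ produced by Lemma~\ref{L6.6} (applied with $n=\mm$, which is legitimate since $2n=m$) and then upgrade it step by step to the canonical form in the statement, using Lemma~\ref{L7.1}~(2) to control the variable types and Lemma~\ref{L6.1} and Lemma~\ref{L6.5} to normalize the index collections. By Lemma~\ref{L7.1}~(2) we already know $P\in\mathfrak{B}_m\oplus\bigoplus_{\alpha_1\bar\beta_1}\omega_{\alpha_1}\bar\omega_{\bar\beta_1}\mathfrak{B}_m$, so every monomial of $P$ is built from weight-two variables $g_{(\alpha_0\alpha_1;\bar\beta_0\bar\beta_1)}$, $h_{(p\bar q;\alpha_1;\bar\beta_1)}$, $\omega_{(\alpha_0;\bar\beta_1)}$, $\bar\omega_{(\alpha_1;\bar\beta_0)}$, together with at most a single factor $\Xi=\Xi_{(\gamma;\bar\delta)}=\omega_\gamma\bar\omega_{\bar\delta}$ of weight two coming from the $\Xi$-slot. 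Weight counting then forces $a+b+c+d=\mm$ when $\Xi=1$ and $a+b+c+d=\mm-1$ when $\Xi\ne1$; in either case $\ell(A)=\mm$, so by the last clause of Lemma~\ref{L6.6} none of the holomorphic collections $U_1,\dots,U_\mm$ is empty, and since each weight-two $g$, $h$, $\omega$, $\bar\omega$, $\Xi$ variable carries exactly one or two holomorphic slots (two for $g$, one for the others), a pigeonhole/degree argument shows each $U_\nu$ is a single index and similarly each $\bar V_\nu$ for the $h$, $\omega$, $\bar\omega$, $\Xi$ factors is a single index.

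Next I would pin down \emph{which} indices appear. Lemma~\ref{L6.6} already gives $U_\nu=(\nu)$ for $\nu\le\mm$, but the labelling of which variable occupies slot $\nu$ is ours to arrange: reorder the product so that the $g$ factors come first, then $h$, then $\omega$, then $\bar\omega$, then $\Xi$, and relabel the indices $1,\dots,\mm$ so that the $g$ factors carry holomorphic indices $\{1,\dots,a\}$ (two slots each would overfill, so in fact each $g$ gets holomorphic index collection $(\nu\nu)$ after using the symmetry \eqref{E3.b} and Lemma~\ref{L6.1}—this is the point where one checks $\deg_\nu(A)=\deg_{\bar\nu}(A)$ forces the repeated holomorphic index inside a single $g$), the $h$ factors carry indices $a+1,\dots,a+b$, the $\omega$ factors carry $a+b+1,\dots,a+b+c$, the $\bar\omega$ factors carry $a+b+c+1,\dots,a+b+c+d$, and $\Xi$ (if present) carries $\mm$. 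The crucial use of Lemma~\ref{L6.5} is to show the $g$-factor holomorphic slots are \emph{not} split as $(\mu\nu)$ with $\mu\ne\nu$: if some $g_{(\mu\nu;\cdot)}$ appeared with $\mu\ne\nu$, apply the $\mathcal B(\cdot)$-machinery to trade $\nu\to\mu$, produce a companion monomial in $P$, and run the maximality argument of Lemma~\ref{L6.6}'s Step~1/Step~2 to reach a contradiction with $\deg_\mu(A)=\deg_{\bar\mu}(A)$ and the fact that $r(P)=0$ forces every holomorphic index to appear. The anti-holomorphic indices $\bar\beta_1,\dots,\bar\beta_{2a}$ on the $g$ factors are then the only ones not yet forced into diagonal form; the count $\deg_{\bar\nu}(A)=\deg_\nu(A)$ together with the fact that the $h$, $\omega$, $\bar\omega$, $\Xi$ factors already consume the anti-holomorphic indices $\overline{a+1},\dots,\overline{a+b+c+d}$ and $\bar\mm$ shows the $g$-factor anti-holomorphic indices must all lie in $\{\bar 1,\dots,\bar a\}$, i.e. $\beta_\nu\le a$ for $1\le\nu\le 2a$.

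The main obstacle I expect is the bookkeeping in the middle step: simultaneously (i) showing each holomorphic collection collapses to a single index, (ii) showing the two holomorphic slots of each surviving $g$ factor carry the \emph{same} index rather than two distinct ones, and (iii) tracking the anti-holomorphic indices through the reordering without losing the constraint $\deg_\nu=\deg_{\bar\nu}$. All three are instances of the same mechanism—combine Lemma~\ref{L6.1} (degree balance), Lemma~\ref{L6.5} (existence of a partner monomial), and the maximality selection of Lemma~\ref{L6.6}—but assembling them into a clean induction on the index $\nu$ while keeping the weight-two constraint from Lemma~\ref{L7.1}~(2) in force is the delicate part. Once (i)–(iii) are in place, relabelling the indices to match the displayed formula is purely cosmetic, and the restriction $\Xi\in\{1,\Xi_{(\mm;\bar\mm)}\}$ is exactly what Lemma~\ref{L7.1}~(2) delivered (at most one $\omega_\gamma\bar\omega_{\bar\delta}$ pair), with $\gamma=\delta=\mm$ after relabelling.
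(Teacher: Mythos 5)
Your proposal misidentifies the hard step. The holomorphic index collections $U_\nu=(\nu,\dots,\nu)$, and in particular the fact that each $g$-factor carries $(\nu\nu)$ rather than a split pair $(\mu\nu)$, are already delivered verbatim by Lemma~\ref{L6.6} with $n=\mm$; re-deriving this via Lemma~\ref{L6.5} is redundant. What is \emph{not} automatic, and what your proposal glosses over as ``purely cosmetic relabelling,'' is the control of the anti-holomorphic indices. Your key step --- that the $h$, $\omega$, $\bar\omega$, $\Xi$ factors ``already consume'' $\overline{a+1},\dots,\bar\mm$, so the $g$-factor anti-holomorphic indices must lie in $\{\bar1,\dots,\bar a\}$ --- is circular: the assertion that the non-$g$ factors pair $\alpha$ with $\bar\alpha$ is precisely what must be proved, and degree balance ($\operatorname{deg}_\nu(A)=\operatorname{deg}_{\bar\nu}(A)$) alone does not yield it. For instance, with $\mm=2$ and $a=1$, the monomial $g_{(11;\bar1\bar2)}\,\omega_{(2;\bar1)}$ has the correct holomorphic collections, satisfies every constraint coming from Lemmas~\ref{L5.2} and~\ref{L6.1}, and yet is not of the required form. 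Degree balance fixes only the \emph{multiset} of anti-holomorphic indices, not their distribution among the factors, and a unitary relabelling of coordinates permutes holomorphic and anti-holomorphic indices simultaneously, so it cannot independently reshuffle the anti-holomorphic slots.

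The paper closes this gap by a two-stage ``touching'' maximality argument that your proposal never articulates. One first shows, via Lemma~\ref{L6.5} together with $r(P)=0$ (the latter is needed to rule out the unwanted branch of $\mathcal{B}(B)$), that two anti-holomorphic indices of degree one cannot occupy the same $g$-factor; one then selects, among the monomials produced by Lemma~\ref{L6.6}, one for which the number of anti-holomorphic indices that touch themselves in a $g$-factor is maximal, and shows this maximality excludes a degree-one index sharing a $g$-factor with a degree-two index, forcing $\beta_\nu\le a$ for $1\le\nu\le 2a$. A second maximization, over the number of holomorphic indices $\alpha$ that touch $\bar\alpha$ in an $h$, $\omega$, $\bar\omega$, or $\Xi$ factor, then diagonalizes the remaining slots. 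Without these explicit maximality criteria and the accompanying applications of Lemma~\ref{L6.5}, your sketch does not establish the lemma.
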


\begin{proof} Apply Lemma~\ref{L6.6} and Lemma~\ref{L7.1}
to choose a monomial $A$ of $P$ of the form
\begin{equation}\label{E7.a}\begin{array}{l}
A=g_{(11;\bar\beta_1\bar\beta_2)}\dots g_{(aa;\bar\beta_{2a-1}\bar\beta_{2a})}
h_{(p_{a+1}\bar q_{a+1};a+1;\bar\beta_{a+1})}\dots h_{(p_b\bar q_b;a+b;\bar\beta_{a+b})}\\[0.05in]
\qquad\omega_{(a+b+1;\bar\beta_{a+b+1})}\dots\omega_{(a+b+c;\bar\beta_{a+b+c})}
\bar\omega_{(a+b+c+1;\bar\beta_{a+b+c+1})}\\[0.08in]
\qquad\dots\bar\omega_{(a+b+c+d;\bar\beta_{a+b+c+d})}\Xi\text{ where }\Xi=1\text{ or }\Xi_{(\mm;\bar\beta_m)}\,.
\end{array}\end{equation}
We use Lemma~\ref{L6.1} and Equation~(\ref{E7.a}) to see
$$
a<\beta\quad\Leftrightarrow\quad\operatorname{deg}_\beta(A)=1\quad\Leftrightarrow
\quad\operatorname{deg}_{\bar\beta}(A)=1\,.
$$
We say that an anti-holomorphic index $\bar\beta$ touches an anti-holomorphic index $\bar\gamma$ in $A$ 
if $A$ is divisible by $g_{(\alpha\alpha;\bar\beta\bar\gamma)}$ for some $\alpha$; we say that $\bar\beta$
touches itself in $A$ if we can take $\bar\beta=\bar\gamma$. 

Let $a<\beta$ so $\operatorname{deg}_\beta(A)=\operatorname{deg}_{\bar\beta}(A)=1$.
Let $\gamma\ne\beta$. We construct a monomial $B$ by replacing $\bar\beta$ by $\bar\gamma$;
$\operatorname{deg}_{\bar\beta}B=0$ and
$A$ is obtained from $B$ by changing $\bar\gamma\rightarrow\bar\beta$.
We apply Lemma~\ref{L6.5} to find
$A_1\in\mathcal{B}(B)$ which appears in $P$ with $A\ne A_1$. Since $\operatorname{deg}_{\bar\beta}B=0$
and $\operatorname{deg}_{\bar\beta}(A_1)\ne0$, $A_1$ is obtained from $B$ by changing
$\bar\gamma\rightarrow\bar\beta$ or, equivalently, $A_1$ arises from $A$ by
interchanging a $\bar\beta$ with a $\bar\gamma$ index. Thus, in particular, since $A_1\ne A$,
two anti-holomorphic indices of degree 1 in $A$ can not touch in $A$.

Choose $A$ of the form given in Equation~(\ref{E7.a}) so the number of anti-holomorphic
indices which touch themselves in $A$ is maximal.
Suppose $\operatorname{deg}_{\bar\beta}(A)=1$ and
$\bar\beta$ touches another anti-holomorphic index $\bar\gamma$ in $A$. 
Then there is a monomial $A_1$ of $P$ different
from $A$ defined by interchanging $\bar\beta$ and $\bar\gamma$. This is not possible since $\bar\gamma$
would touch itself in $A_1$ which contradicts the maximality of $A$. Thus $\{\bar V_1,\dots,\bar V_a\}$
consists solely of anti-holomorphic indices of degree $2$ in $A$ and hence must comprise all the anti-holomorphic
indices of degree 2 in $A$. 

We say that a holomorphic index $\alpha$ touches an anti-holomorphic index $\bar\beta$ in $A$ if $A$
is divisible by $\omega_{(\alpha;\bar\beta)}$, by $\bar\omega_{(\alpha;\bar\beta)}$ or by $\Xi(\alpha;\bar\beta)$.
If $A$ is as constructed above, then every holomorphic index $\alpha$ of degree $1$ 
touches an anti-holomorphic index $\bar\beta$ of degree $1$ in $A$. 
Among all the monomials $A$ constructed above, choose $A$ so the number of holomorphic indices $\alpha$
which touch $\bar\alpha$ in $A$ is maximal. Let $a+b<\alpha$. If $\alpha$ touches $\bar\beta$ in $A$ with
$\alpha\ne\beta$, we can interchange $\bar\beta$ and $\bar\alpha$ to construct a monomial $A_1$ of $P$
where there is one more holomorphic - anti-holomorphic touching which is impossible. 
Therefore $A$ has the form given in the Lemma.
\end{proof}

We use Lemma~\ref{L7.1} to improve Equation~(\ref{E2.c}). Let $\mathcal{M}=\mathcal{M}_1\times\mathcal{M}_2$.
Since $a_{m_i,2n_i}=0$ for $2n_i<m_i$, taking $2n=m$ in Equation~(\ref{E2.c}) yields
\begin{equation}\label{E7.b}
a_{m,m}(\mathcal{E}_{\operatorname{Dol}}(\mathcal{M}))(z_1,z_2)=
a_{m_1,m_1}(\mathcal{E}_{\operatorname{Dol}}(\mathcal{M}_1))(z_1)
\cdot a_{m_2,m_2}(\mathcal{E}_{\operatorname{Dol}}(\mathcal{M}_2))(z_2)\,.
\end{equation}

Let $\mathcal{N}_\kk(0)=(N^{2\kk},J_N,g_N,E_N,h_N,0)$ 
be a structure of complex dimension $\kk$ with trivial twisting (1,0)--form $\omega_0=0$. 
Let
$$\mathbb{T}^2(\omega_i)=(S^1\times S^1,dx^2+dy^2,J_2,\text{\bf1},h_0,\omega_i)
$$
 be the torus $S^1\times S^1$
with the flat metric $dx^2+dy^2$, 
usual complex structure $J_2:\frac{\partial}{\partial x}\rightarrow\frac{\partial}{\partial y}$ and 
$J_2:\frac{\partial}{\partial y}\rightarrow-\frac{\partial}{\partial x}$,
flat line bundle {\bf1}$=(S^1\times S^1)\times\mathbb{C}$, trivial
Hermitian metric $h_0$, and (possibly) non-trivial
twisting (1,0)-form $\omega_i$ with $\partial\omega_i=0$. 
Let $\vec\omega=(\omega_1,\dots,\omega_{\mm-\kk})$. Set
$$
\mathcal{M}(\kk;\vec\omega)
:=\mathcal{N}_\kk(0)\times\mathbb{T}(\omega_1)\times\dots\times\mathbb{T}(\omega_{\mm-\kk})\,.
$$
By Equation~(\ref{E7.b}),
\begin{eqnarray*}
&&a_{m,m}(\mathcal{E}_{\operatorname{Dol}}(\mathcal{M}(\kk;\vec\omega)))\\
&=&a_{k,2\kk}(\mathcal{E}_{\operatorname{Dol}}(\mathcal{N}_\kk))a_{2,2}(\mathcal{E}_{\operatorname{Dol}}(\mathbb{T}^2(\omega_1)))\dots 
a_{2,2}(\mathcal{E}_{\operatorname{Dol}}(\mathbb{T}^2(\omega_{\mm-\kk})))\,.
\end{eqnarray*}
By Lemma~\ref{L7.3}, if $0\ne P\in\ker(r)\cap\mathcal{P}_{\mm,m}$, then $P(\mathcal{M}(\kk;\vec\omega))\ne0$
for some $\kk$ and some $\vec\omega$. On the other hand, we may use Equation~(\ref{E7.b}),
Theorem~\ref{T1.3},
and Lemma~\ref{L7.2}
to see that
$$\left\{a_{m,m}-\left\{\operatorname{Td}\wedge\operatorname{ch}\wedge\Theta
\right\}_{m}\right\}(\mathcal{M}(\kk;\vec\omega))=0\text{ for all }\kk\text{ and }\vec\omega\,.
$$
Theorem~\ref{T1.4} now follows.\hfill\qed

\subsection{The kernel of $r$} It seems useful to identify $\ker(r:\mathcal{P}_{\mm,2\mm}\rightarrow\mathcal{P}_{\mm-1,2\mm})$
in a bit more detail.
Let $\operatorname{ch}_k$ be the $k^{\operatorname{th}}$ component of
the Chern character (see~\cite{H66}). We decompose the graded ring of characteristic forms into homogeneous components
$$
\mathcal{C}_m:=\mathbb{C}[\operatorname{ch}_k(TM,J,g),\operatorname{ch}_k(E,h)]=\oplus_k\mathcal{C}_m^{2k}\,.
$$
We also consider the graded ring
$$
\mathcal{D}_m:=\mathbb{C}[\operatorname{ch}_k(TM,J,g),\operatorname{ch}_k(E,h),d\omega,d\bar\omega,\omega,\bar\omega]
=\oplus_k\mathcal{D}^{2k}_m\,.
$$
Let $\mathcal{P}_{\mm,2\mm}^{g,E}$ be the
subspace of invariants which are independent of $\omega$.

\begin{lemma} 
\ \begin{enumerate}
\item $\ker(r:\mathcal{P}_{\mm,m}^{g,E}\rightarrow\mathcal{P}_{\mm-1,m}^{g,E})=\star\mathcal{C}_m^{m}$.
\item $\ker(r:\mathcal{P}_{\mm,m}\rightarrow\mathcal{P}_{\mm-1,m})=\star\mathcal{D}_m^m$.
\end{enumerate}\end{lemma}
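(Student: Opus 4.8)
The plan is to reduce the identification of $\ker(r)$ to the structural description of monomials already obtained in Lemma~\ref{L7.3}, together with the product-with-a-torus construction that underlies the proof of Theorem~\ref{T1.4}. First I would establish the inclusions $\star\mathcal{C}_{\mm}^m\subset\ker(r)\cap\mathcal{P}_{\mm,m}^{g,E}$ and $\star\mathcal{D}_\mm^m\subset\ker(r)\cap\mathcal{P}_{\mm,m}$. For this one checks that each generating characteristic form $\operatorname{ch}_k(TM,J,g)$, $\operatorname{ch}_k(E,h)$, and each of $\omega$, $\bar\omega$, $d\omega$, $d\bar\omega$ can be written, in the normalized coordinates of Lemma~\ref{L3.1}, as a polynomial in the weight-$2$ variables of $\mathfrak{B}_\mm$ (plus the weight-$1$ variables $\omega_{\alpha_1}$, $\bar\omega_{\bar\beta_1}$ in the second case), and that each is invariant; products of these lying in $\mathcal{P}_{\mm,m}$ vanish on the flat torus $\mathbb{T}$ with $E$ trivial and $\omega=0$, since on such a product every characteristic form of positive degree restricts to zero and $\omega=0$. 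Hence $r$ annihilates them, giving $\subset\ker(r)$.

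The reverse inclusion is the substantive part, and here the engine is Lemma~\ref{L7.1}(2) and Lemma~\ref{L7.3}. Given $0\ne P\in\ker(r)\cap\mathcal{P}_{\mm,m}$, Lemma~\ref{L7.1}(2) already places $P$ in $\mathfrak{B}_m\oplus\bigoplus\omega_{\alpha_1}\bar\omega_{\bar\beta_1}\mathfrak{B}_m$, so $P$ is a polynomial of the right ``shape'': each monomial is a product of weight-$2$ variables, optionally times one factor $\omega_{\alpha_1}\bar\omega_{\bar\beta_1}$. What remains is to see that the span of such invariant $P$ is exactly $\star\mathcal{D}_\mm^m$ (and, in the $\omega$-independent case, exactly $\star\mathcal{C}_\mm^m$). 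For the $\omega$-independent piece $\mathcal{P}_{\mm,m}^{g,E}$, this is essentially the classical statement: after the reduction to $\mathfrak{B}_\mm$, invariance under $U(\mm)\times U(\dim E)$ together with $r(P)=0$ forces $P$ to be a $U(\mm)$-invariant polynomial in the curvature $2$-jets of weight $m$ in which every holomorphic index $1,\dots,\mm$ appears — and by the first fundamental theorem of invariant theory for the unitary group such invariants are generated, after applying $\star$, by the Chern forms; the condition that every index occurs (Lemma~\ref{L5.2}) is exactly what forces top degree $m$ and rules out Chern forms of the tangent bundle of lower degree times a constant. This is the content of Gilkey's treatment~\cite{G73a} of Theorem~\ref{T1.3}(2), which I would invoke rather than redo. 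For the general case, Lemma~\ref{L7.3} says every monomial of $P$ has the normalized form with a single optional weight-$1$ factor $\omega_{\mm}\bar\omega_{\bar\mm}$ and all weight-$2$ factors in $\mathfrak{B}_m$; splitting $P=P_0+P_1$ according to whether $\Xi=1$ or $\Xi=\Xi_{(\mm;\bar\mm)}$ and applying the same unitary-invariance argument to each homogeneous-in-$\omega$ piece identifies $P_0$ with $\star$ of a polynomial in $\operatorname{ch}_k(TM,J,g)$, $\operatorname{ch}_k(E,h)$, $d\omega$, $d\bar\omega$ (the variables $d\omega$, $d\bar\omega$ being precisely the invariant weight-$2$ combinations of the $\omega_{(\alpha_0;\bar\beta_1)}$, $\bar\omega_{(\alpha_1;\bar\beta_0)}$ variables), and $P_1$ with $\omega\wedge\bar\omega$ times such a polynomial; collecting everything gives $P\in\star\mathcal{D}_\mm^m$.

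The main obstacle I anticipate is making the passage from ``$P$ is a $U(\mm)$-invariant polynomial in the weight-$2$ (and weight-$1$) tensorial variables with every index occurring'' to ``$P$ is $\star$ of a characteristic form'' fully rigorous in the presence of the extra variables $\omega$, $\bar\omega$, $d\omega$, $d\bar\omega$: one must check that $\omega$ and $\bar\omega$ contribute only through the $U(1)$-weight bookkeeping of Lemma~\ref{L6.1} (so that $P_0$, $P_1$ are the only possible $\omega$-homogeneous pieces given weight $m$), and that no invariant combination of the weight-$2$ $g$-, $h$-, $\omega$-, $\bar\omega$-variables other than the listed characteristic forms and $d\omega$, $d\bar\omega$ survives the constraint that every holomorphic and antiholomorphic index of degree $1$ must be present — this is where one leans on Lemma~\ref{L7.3} to pin down the index structure, and where the cleanest route is probably to argue that the evaluation maps on the test manifolds $\mathcal{M}(\kk;\vec\omega)$ already separate points of $\star\mathcal{D}_\mm^m$, so that the two inclusions together with a dimension/separation count close the argument exactly as in the proof of Theorem~\ref{T1.4} just completed.
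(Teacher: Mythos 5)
Your proposal is correct and follows essentially the paper's route: Assertion~(1) is cited from Theorem~1 of Gilkey~\cite{G73a}, and Assertion~(2) is closed by factoring $P$ over the test manifolds $\mathcal{M}(\kk;\vec\omega)$ via Equation~(\ref{E7.b}) (Assertion~(1) controls the metric factor, Lemma~\ref{L7.2} controls the torus factors) together with the separation statement that a nonzero $P\in\ker(r)\cap\mathcal{P}_{\mm,m}$ is nonzero on some $\mathcal{M}(\kk;\vec\omega)$ by Lemma~\ref{L7.3}, exactly as at the end of the proof of Theorem~\ref{T1.4}. One misstatement worth correcting: Lemma~\ref{L7.3} asserts only the \emph{existence} of a single monomial of $P$ in the normalized form with $\Xi\in\{1,\Xi_{(\mm;\bar\mm)}\}$, not that every monomial of $P$ has that shape; what constrains all monomials of $P$ to lie in $\mathfrak{B}_m$ or in $\omega_{\alpha_1}\bar\omega_{\bar\beta_1}\mathfrak{B}_m$ is Lemma~\ref{L7.1}(2), and that is what justifies the $P=P_0+P_1$ split you use in the middle of your argument. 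This confusion does not affect the endgame you (correctly) lean on, since the separation argument needs Lemma~\ref{L7.3} only in its actual, weaker form, but the unitary-invariance route you sketch in the middle paragraph would need the Lemma~\ref{L7.1}(2) version to get off the ground.
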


\begin{proof} Assertion~(1) follows from Theorem~1 of Gilkey~\cite{G73a}. Assertion~(2) is a scholium to the arguments we
have given above. We use Assertion~(1) to control the metric terms. We express $P\in\ker(r)$ on $\mathcal{M}(\kk;\vec\omega)$
as a metric invariant on $\mathcal{M}_{\kk}$ times invariants on the $\mathbb{T}^2(\omega_i)$. The metric invariant is itself
in the kernel of $r$ and hence is a characteristic class; the remaining invariants only involve $\Theta$. Thus there is an
element of $\mathcal{D}$ which hits this invariant and hence by Lemma~\ref{L7.3}, $P$ can be decomposed appropriately.
\end{proof}


\begin{thebibliography}{99}
\bibitem{AG20} {\it J. \'Alvarez L\'opez and P. Gilkey}: The local index density of the perturbed de~Rham complex. 
arXiv 2004.02243.
\bibitem{AKL} {\it J. A. \'Alvarez L\'opez, Y. A. Kordyukov, and E. Leichtnam}: A trace formula for foliated flow. In preparation.
\bibitem{ABP73} {\it M. F. Atiyah, R. Bott, and V. K. Patodi}: On the heat equation and the index theorem.
Invent. Math. {\bf13} (1973), 279--330; errata {\bf 28} (1975), 277--280. Zbl~0257.58008, 
MR650828, doi:10.1007/BF01425417; Zbl~0301.58018, MR650829, doi:10.1007/BF01425562
\bibitem{BZ92} {\it J.-M. Bismut, and W. Zhang}: An extension of a theorem by Cheeger and Muller. Ast\'erisque, Vol. 205, Soc. Math. France (1992). Zbl~0781.58039, MR1185803
\bibitem{BF97} {\it M. Braverman, and M. Farber}: Novikov type inequalities for differential forms with non-isolated zeros. 
Math. Proc. Camb. Phil. Soc. {\bf 122} (1997), 357--375. Zbl~0894.58012, MR1458239, doi:10.1017/S0305004197001734
\bibitem{BH01} {\it D.~Burghelea and S.~Haller}: On the topology and analysis of a closed one form. I. (Novikov's theory revisited). 
Essays on geometry and related topics, Vol. 1, 2, Monogr. Enseign. Math., Vol.~38, Enseignement Math., Geneva, 2001, pp.~133--175. Zbl~1017.57013, MR1929325
\bibitem{BH08} {\it D.~Burghelea, and S.~Haller}: Dynamics, Laplace transform and spectral geometry. J. Topol. {\bf 1} (2008), 115--151. Zbl~1156.57022, MR2365654, doi:10.1112/jtopol/jtm005
\bibitem{C44} {\it S. Chern}: A simple intrinsic proof of the Gauss-Bonnet formula for closed Riemannian manifolds. 
Ann. Math. (2) {\bf 45} (1944), 741--752. Zbl~0060.38103, MR11027, doi:10.2307/1969302
\bibitem{G73} {\it P. Gilkey}: Curvature and the eigenvalues of the Laplacian for elliptic complexes. Adv. Math. {\bf 10} (1973), 344--382. Zbl~0259.58010,MR324731, doi:10.1016/0001-8708(73)90119-9
\bibitem{G73a} {\it P. Gilkey}: Curvature and the eigenvalues of the Laplacian for K\"ahler manifolds. Adv. Math. {\bf 11} (1973), 311--325. Zbl~0285.53044, MR334290, doi:10.1016/0001-8708(73)90014-5
\bibitem{G95} {\it P. Gilkey}: Invariance Theory, the Heat Equation, 
and the Atiyah-Singer Index Theorem $2^{\operatorname{nd}}$ ed., 
Studies in Advanced Mathematics, CRC Press (1995). Zbl~0856.58001, MR1396308
\bibitem{GNP97} {\it P. Gilkey, S. Nik\v cevi\'c, and J. Pohjanpelto}: The local index formula for a Hermitian manifold.
Pac. J. Math. {\bf180} (1997), 51--56. Zbl~0885.58091, MR1474893, doi:10.2140/pjm.1997.180.51
\bibitem{HM06} {\it F.~R.~Harvey and G.~Minervini}: Morse Novikov theory and cohomology with forward supports. Math. Ann. {\bf 335} (2006), 787--818. Zbl~1109.57019, MR2232017, doi:10.1007/s00208-006-0765-4
\bibitem{HS85} {\it B. Helffer, and J. Sj\"ostrand}: Puits multiples en mecanique semi-classique~IV: Edude du complexe de Witten. Commun. Partial Differ. Equations 
{\bf 10} (1985), 245--340. Zbl~0597.35024, MR780068, doi:10.1080/03605308508820379
\bibitem{H66} {\it F. Hirzebruch}: Topological methods in algebraic geometry. Springer-Verlag (Berlin) 1966. 
Zbl~0138.42001, MR0202713
\bibitem{Mi15} {\it G.~Minervini}: A current approach to Morse and Novikov theories. Rend. Mat. Appl., VII. Ser. {\bf 36} (2015), 95--195. MR3533253, Zbl~1361.58007
\bibitem{N81} {\it S. P. Novikov}: Multivalued functions and functionals. An analogue of the Morse theory. Soviet Math., Dokl. {\bf 24} (1981), 222--226. Zbl~0505.58011, MR630459
\bibitem{N82} {\it S. P. Novikov}: The Hamiltonian formalism and a multivalued analogue of Morse theory. Russian Math. Surveys {\bf 37} (1982), 1--56. MR676612
\bibitem{P71} {\it V. K. Patodi}: Curvature and the eigenforms of the Laplace Operator. J. Differ. Geom. {\bf 5} (1971), 233--249. Zbl~0211.53901, MR292114, doi:10.4310/jdg/1214429791
\bibitem{P71a} {\it V. K. Patodi}: An analytic proof of the Riemann-Roch-Hirzebruch theorem for K\"ahler manifolds. J. Differ. Geom. {\bf 5} (1971), 251--283. Zbl~0219.53054, MR290318, doi:10.4310/jdg/1214429991
\bibitem{Pa87} {\it A. Pazhitnov}: An analytic proof of the real part of Novikov's inequalities. Soviet Math., Dokl. {\bf 35} (1987), 456--457. Zbl~0647.57025, MR891557
\bibitem{S68} {\it R. T. Seeley}: Complex powers of an elliptic operator. Proc. Symp. Pure Math. {\bf10} (1968), Amer. Math. Soc., 288--307. Zbl~0159.15504, MR0237943
\bibitem{W46}  {\it H. Weyl}: The classical groups. Princeton Univ. Press, Princeton (1946) (8${}^{\operatorname{th}}$ printing). Zbl~1024.20502, MR1488158
\bibitem{W82} {\it E. Witten}: Supersymmetry and Morse theory. J. Differ. Geom. {\bf17} (1982), 661-692. Zbl~0499.53056, MR683171, doi:10.4310/jdg/1214437492


\end{thebibliography}
\end{document}